\newcommand{\strutstretchdef}{\newcommand{\strutstretch}{\vphantom{\raisebox{1pt}{$\big($}\raisebox{-1pt}{$\big($}}}}
\theoremstyle{plain}
\newtheorem{theorem}{Theorem}[section]
\newtheorem{proposition}[theorem]{Proposition}
\newtheorem{corollary}[theorem]{Corollary}
\theoremstyle{definition}
\newtheorem{definition}[theorem]{Definition}
\newtheorem{example}[theorem]{Example}
\theoremstyle{remark}
\newtheorem{remark}[theorem]{Remark}
\numberwithin{equation}{section}
\newlength{\struh}
\newlength{\textminustop}
\newcommand{\ran}{\mbox{ran}}
\newcommand{\ncom}{\newcommand}
\ncom{\bq}{\begin{equation}}
\ncom{\eq}{\end{equation}}
\ncom{\beqn}{\begin{eqnarray*}}
\ncom{\eeqn}{\end{eqnarray*}}
\ncom{\beq}{\begin{eqnarray}}
\ncom{\eeq}{\end{eqnarray}}
\ncom{\nno}{\nonumber}
\ncom{\rar}{\rightarrow}
\ncom{\Rar}{\Rightarrow}
\ncom{\noin}{\noindent}
\ncom{\bc}{\begin{centre}}
\ncom{\ec}{\end{centre}}
\ncom{\sz}{\scriptsize}
\ncom{\rf}{\ref}
\ncom{\sgm}{\sigma}
\ncom{\Sgm}{\Sigma}
\ncom{\dt}{\delta}
\ncom{\Dt}{Delta}
\ncom{\lmd}{\lambda}
\ncom{\Lmd}{\Lambda}
\ncom{\eps}{\epsilon}
\ncom{\pcc}{\stackrel{P}{>}}
\ncom{\dist}{{\rm\,dist}}
\ncom{\sspan}{{\rm\,span}}
\ncom{\im}{{\rm Im\,}}
\ncom{\sgn}{{\rm sgn\,}}
\ncom{\ba}{\begin{array}}
\ncom{\ea}{\end{array}}
\ncom{\eop}{\hfill{{\rule{2.5mm}{2.5mm}}}}
\ncom{\eoe}{\hfill{{\rule{1.5mm}{1.5mm}}}}
\ncom{\eof}{\hfill{{\rule{1.5mm}{1.5mm}}}}
\ncom{\hone}{\mbox{\hspace{1em}}}
\ncom{\htwo}{\mbox{\hspace{2em}}}
\ncom{\hthree}{\mbox{\hspace{3em}}}
\ncom{\hfour}{\mbox{\hspace{4em}}}
\ncom{\hsev}{\mbox{\hspace{7em}}}
\ncom{\vone}{\vskip 2ex}
\ncom{\vtwo}{\vskip 4ex}
\ncom{\vonee}{\vskip 1.5ex}
\ncom{\vthree}{\vskip 6ex}
\ncom{\vfour}{\vspace*{8ex}}
\ncom{\norm}{\|\;\;\|}
\ncom{\integ}[4]{\int_{#1}^{#2}\,{#3}\,d{#4}}
\ncom{\inp}[2]{\langle{#1},\,{#2} \rangle}
\ncom{\Inp}[2]{\Langle{#1},\,{#2} \Langle}
\ncom{\vspan}[1]{{{\rm\,span}\#1 \}}}
\ncom{\dm}[1]{\displaystyle {#1}}
\begin{document}
\title[Bounded point evaluation]{Bounded point evaluation for operators with the wandering subspace property}

\author[S. Trivedi]{Shailesh Trivedi}


\address{Department of Mathematics and Statistics\\
Indian Institute of Technology Kanpur, India}
 \email{shailtr@iitk.ac.in}


\thanks{The work of the author is supported through the Inspire Faculty Fellowship DST/INSPIRE/04/2018/000338.}

\keywords{bounded point evaluation, wandering subspace property, weighted shift, directed graph}

\subjclass[2010]{Primary 47B02; Secondary 47A65, 47B37, 05C20}

\begin{abstract}
We extend and study the notion of bounded point evaluation introduced by Williams for a cyclic operator to the class of operators with the wandering subspace property. We characterize the set $bpe(T)$ of all bounded point evaluations for an operator $T$ with the wandering subspace property in terms of the invertibility of certain projections. This result generalizes the earlier established characterization of $bpe(T)$ for a finitely cyclic operator $T$. Further, if $T$ is a left-invertible operator with the wandering subspace property, then we determine the $bpe(T)$ and the set $abpe(T)$ of all analytic bounded point evaluations for $T$. We also give examples of left-invertible operator $T$ with the wandering subspace property for which $\mathbb D\big(0, r(T')^{-1}\big) \subsetneqq abpe(T) \subseteq bpe(T)$, where $r(T')$ is the spectral radius of the Cauchy dual $T'$ of $T$.
\end{abstract}

\maketitle

\section{Introduction}

The primary motivation for the present work comes from Shimorin's analytic model for a left-invertible analytic operator \cite{S}, which facilitates us to model a left-invertible analytic operator $T$ as the operator of multiplication by the co-ordinate function on the Hilbert space $\mathscr H$ of $\ker T^*$-valued holomorphic functions defined on $\mathbb D\big(0, r(T')^{-1}\big)$, where $r(T')$ is the spectral radius of the Cauchy dual $T'$ of $T$. This analytic model gives that the open disc $\mathbb D\big(0, r(T')^{-1}\big)$ is contained in the set of all bounded point evaluations on $\mathscr H$. On the other hand, the example \cite[Example 2]{CT} shows that the disc $\mathbb D\big(0, r(T')^{-1}\big)$ is not optimum and may be properly contained in the set of all bounded point evaluations on $\mathscr H$ in general. Therefore, it comes very naturally to inquire about the set of all bounded point evaluations on $\mathscr H$. For this, we adapt the abstract approach of Williams \cite{W} who, motivated by the work of Trent \cite{Tr}, introduced and studied the notion of bounded point evaluation for a cyclic operator. His idea was extended by Miller-Miller-Neumann \cite{MMN} for rationally cyclic operators which was further generalized by the trio Guendafi-Mbekhta-Zerouali \cite{GMZ} for finitely multicyclic operators. Later, Mbekhta-Ourchane-Zerouali \cite{MOZ} extended and studied the notion of bounded point evaluation for rationally multicyclic operators. A contribution in the study of bounded point evaluations for a cyclic operator was also made by Bourhim-Chidume-Zerouali \cite{BCZ}.

In this paper, we extend and study the notion of bounded point evaluation for an operator with the wandering subspace property. Besides, studying the general properties of the set $bpe(T)$ of all bounded point evaluations for an operator $T$ with the wandering subspace property, we also describe $bpe(T)$ in terms of the invertibility of the restriction of specific projections (see Theorem \ref{bpe-thm}). In addition, we give the complete description of $bpe(T)$ for a left-invertible operator $T$ with the wandering subspace property. This comprises a major part of Section 2. In the third section, we study the notion of analytic bounded point evaluation and characterize the set $abpe(T)$ of all analytic bounded point evaluations for a left-invertible operator $T$ with the wandering subspace property. This, in particular, recovers the Shimorin's analytic model for a left-invertible operator with the wandering subspace property. We wind up this paper with examples of left-invertible operator $T$ with the wandering subspace property for which $\mathbb D\big(0, r(T')^{-1}\big) \subsetneqq abpe(T) \subseteq bpe(T)$.

We set below the notations to be used in the posterior sections. The set of non-negative integers, the field of real numbers and the field of complex numbers are denoted by $\mathbb N, \mathbb R$ and $\mathbb C$ respectively. The complex conjugate of a complex number $w$ is denoted by $\overline{w}$, and for a non-empty subset $F$ of $\mathbb C$, the complex conjugate of $F$ is given by $\overline{F} := \{\overline w : w \in F\}$. The interior of $F$ is denoted by $\mbox{int}\, F$. An open disc in the complex plane centred at $w$ with radius $r >0$ is denoted by $\mathbb D(w,r)$ whereas the closed disc centred at $w$ with radius $r >0$ is denoted by $\overline{\mathbb D}(w,r)$. For a polynomial $p$, $\mbox{deg} \, p$ denotes the degree of $p$. Let $\mathcal H$ be a complex separable Hilbert space. By a subspace of $\mathcal H$ we do not mean a closed subspace of $\mathcal H$. If $\mathcal M$ is a subspace of $\mathcal H$, then its closure is denoted by $\mbox{cl}\, \mathcal M$. Further, if $\mathcal M$ is a subspace of $\mathcal H$, then by $\dim \mathcal M$ we mean the Hilbert space dimension of $\mathcal M$. The orthogonal complement of a subspace $\mathcal M$ is denoted by $\mathcal M^\bot$. If $\mathcal M$ is a closed subspace of $\mathcal H$, then $P_{\mathcal M}$ stands for the orthogonal projection of $\mathcal H$ onto $\mathcal M$. For a subset $S$ of $\mathcal H$, the linear span and the closed linear span of $S$ are denoted by $\mbox{span}\{x : x \in S\}$ and $\bigvee\{x : x \in S\}$ respectively. Let $\mathcal B(\mathcal H)$ denote the algebra of bounded linear operators on $\mathcal H$. For $T \in \mathcal B(\mathcal H)$, the kernel, range and adjoint of $T$ are denoted by $\ker T$, $\ran\, T$ and $T^*$ respectively. The spectrum and the point spectrum of $T \in \mathcal B(\mathcal H)$ are respectively denoted by $\sigma(T)$ and $\sigma_p(T)$ whereas $r(T)$ stands for the spectral radius of $T$. If $T \in \mathcal B(\mathcal H)$ is left-invertible, then $T^*T$ is invertible. In this case, the Cauchy dual $T'$ of $T$ is defined as $T' := T(T^*T)^{-1}$. Further, we say that $T \in \mathcal B(\mathcal H)$ is analytic if $\bigcap_{n \in \mathbb N} T^n(\mathcal H) = \{0\}$.

\section{Bounded point evaluation: General properties}

Let $T$ be a bounded linear operator on a complex separable Hilbert space $\mathcal H$. A closed subspace $\mathcal M$ of $\mathcal H$ is said to be a {\it cyclic subspace} for $T$ if 
\beqn
\bigvee_{n \in \mathbb N} T^n(\mathcal M) = \mathcal H.
\eeqn
If $\dim \mathcal M$ is finite, then $T$ is said to be {\it finitely cyclic}. We say that $T$ has the {\it wandering subspace property} if $\ker T^*$ is a cyclic subspace for $T$. A rich supply of examples of operators with the wandering subspace property is provided by the class of operator-valued unilateral shift \cite[Theorem 4.5]{GKT}. Let $T \in \mathcal B(\mathcal H)$ have the wandering subspace property. Then for a $\ker T^*$-valued polynomial $p(z) = \sum_{n=0}^k x_n z^n$, $x_n \in \ker T^*$, we set  
\beqn
p(T) := \sum_{n=0}^k T^n x_n.
\eeqn
If $\mathcal P(\mathbb C, \ker T^*)$ denotes the vector space of all $\ker T^*$-valued polynomials, then $\{p(T) : p \in \mathcal P(\mathbb C, \ker T^*)\}$ is a dense subspace of $\mathcal H$. Thus, we have the following definition.

\begin{definition}
Let $T$ be a bounded linear operator on a complex separable Hilbert space $\mathcal H$. Suppose that $T$ has the wandering subspace property. A point $w \in \mathbb C$ is said to be a {\it bounded point evaluation for} $T$ if  there exists a positive constant $c$ such that
\beqn
\|p(w)\| \leqslant c\, \|p(T)\| \ \mbox{for all } p \in \mathcal P(\mathbb C, \ker T^*).
\eeqn
The set of all bounded point evaluations for $T$ is denoted by $bpe(T)$.
\end{definition}

The notation $bpe(T)$ was used for the set of all bounded point evaluations for a finitely cyclic operator (with respect to some cyclic subspace) \cite{GMZ}. It can be easily seen that if $T$ is finitely cyclic with the wandering subspace property, then $bpe(T)$ given by the preceding definition coincides with the $bpe(T)$ defined in \cite{GMZ}.

Let $w \in bpe(T)$. Then the map $E_w:\{p(T) : p \in \mathcal P(\mathbb C, \ker T^*)\} \rar \ker T^*$ defined by
\beqn
E_w (p(T)) = p(w), \quad p \in \mathcal P(\mathbb C, \ker T^*),
\eeqn
is well-defined, linear, surjective and continuous. Also, it is easy to see that $E_w|_{\ker T^*} = I_{\ker T^*}$. Since $\{p(T) : p \in \mathcal P(\mathbb C, \ker T^*)\}$ is a dense subspace of $\mathcal H$, $E_w$ can be extended continuously on $\mathcal H$. We denote the continuous extension of $E_w$ on $\mathcal H$ by $E_w$ itself. The following proposition studies the general properties of $E_w$.

\begin{proposition}\label{bpe-prop}
Let $T$ be a bounded linear operator on a complex separable Hilbert space $\mathcal H$. Suppose that $T$ has the wandering subspace property. Then the following statements hold:
\begin{enumerate}
\item[$(i)$] $0 \in bpe(T) \subseteq \overline{\sigma_p(T^*)}$. 
\item[$(ii)$] If $w \in bpe(T)$, then $E_w^*(\ker T^*) = \ker(T^* - \overline{w})$. Moreover, $E_w^* : \ker T^* \rar \ker(T^* - \overline{w})$ is invertible.
\item[$(iii)$] $\ker E_w = \textup{cl\,ran}(T - w)$ for each $w \in bpe(T)$.
\item[$(iv)$] For each $w \in bpe(T)$, $E_w E^*_w$ is invertible on $\ker T^*$. Moreover, $E_0 E^*_0 = I_{\ker T^*}$.
\item[$(v)$] If $w \in bpe(T)$, then $E_w T^n x = w^n E_w x$ for all $x \in \mathcal H$ and $n \in \mathbb N$.  
\end{enumerate}
\end{proposition}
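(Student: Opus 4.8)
The plan is to establish the five assertions in the order $(v)$, $(ii)$, $(i)$, $(iii)$, $(iv)$, since each one leans on the earlier ones; throughout one may assume $\mathcal H \neq \{0\}$, so that $\ker T^* \neq \{0\}$. Assertion $(v)$ is the workhorse. First I would check $E_w\big(T^n p(T)\big) = w^n p(w)$ for every $\ker T^*$-valued polynomial $p$: if $p(z) = \sum_j x_j z^j$ then $T^n p(T) = q(T)$ with $q(z) = z^n p(z)$, so $E_w\big(T^n p(T)\big) = q(w) = w^n p(w) = w^n E_w\big(p(T)\big)$. As the polynomial vectors are dense and $E_w, T^n$ are continuous, this passes to $E_w T^n x = w^n E_w x$ for all $x \in \mathcal H$; equivalently $E_w T^n = w^n E_w$ on $\mathcal H$.

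For $(ii)$, the inclusion $E_w^*(\ker T^*) \subseteq \ker(T^* - \overline w)$ is immediate from $(v)$: for $y \in \ker T^*$ and $x \in \mathcal H$, $\langle T^* E_w^* y, x\rangle = \langle E_w^* y, Tx\rangle = \langle y, E_w Tx\rangle = \overline w\,\langle E_w^* y, x\rangle$, so $T^* E_w^* y = \overline w\, E_w^* y$. The reverse inclusion is the \textbf{main obstacle}. Given $z$ with $T^* z = \overline w z$, put $P := P_{\ker T^*}$; using $(T^*)^n z = \overline w^n z$ one computes, for every polynomial vector $x = p(T) = \sum_n T^n x_n$, that $\langle x, z\rangle = \sum_n \langle x_n, (T^*)^n z\rangle = \langle p(w), z\rangle = \langle E_w x, Pz\rangle = \langle x, E_w^*(Pz)\rangle$, the fourth equality holding because $p(w) = E_w x \in \ker T^*$. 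By density this identity persists for all $x \in \mathcal H$, hence $z = E_w^*(Pz) \in E_w^*(\ker T^*)$. Thus $E_w^*(\ker T^*) = \ker(T^* - \overline w)$, and since $E_w^*$ is injective (the adjoint of the surjection $E_w$), the map $E_w^* : \ker T^* \to \ker(T^* - \overline w)$ is a bounded linear bijection between Hilbert spaces, so it is invertible.

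Assertions $(i)$ and $(iii)$ are now short. That $0 \in bpe(T)$ is elementary and independent of the rest: for $p(z) = \sum_{n=0}^k x_n z^n$ the constant term $p(0) = x_0$ lies in $\ker T^* = (\textup{cl\,ran}\,T)^\perp$ while $\sum_{n \geq 1} T^n x_n \in \ran\,T$, so $\|p(T)\|^2 = \|p(0)\|^2 + \big\|\sum_{n \geq 1} T^n x_n\big\|^2 \geq \|p(0)\|^2$, which is the required estimate with constant $1$. For $bpe(T) \subseteq \overline{\sigma_p(T^*)}$: if $w \in bpe(T)$, then by $(ii)$, $\ker(T^* - \overline w) = E_w^*(\ker T^*) \neq \{0\}$, so $\overline w \in \sigma_p(T^*)$. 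For $(iii)$: $(v)$ gives $E_w(T - w)x = w E_w x - w E_w x = 0$, so $\textup{cl\,ran}(T - w) \subseteq \ker E_w$; conversely $(\ker E_w)^\perp = \textup{cl\,ran}(E_w^*) = \ker(T^* - \overline w) = \big(\textup{cl\,ran}(T - w)\big)^\perp$ by $(ii)$ together with the closedness of $\ker(T^* - \overline w)$, which forces $\ker E_w = \textup{cl\,ran}(T - w)$.

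Finally, $(iv)$ follows by combining $(ii)$ and $(iii)$: the orthogonal decomposition $\mathcal H = \ker(T^* - \overline w) \oplus \ker E_w$ shows that $E_w$ restricts to a bounded bijection of $\ker(T^* - \overline w)$ onto $E_w(\mathcal H) = \ker T^*$, hence is invertible there; composing with the invertible $E_w^* : \ker T^* \to \ker(T^* - \overline w)$ from $(ii)$ shows $E_w E_w^*$ is invertible on $\ker T^*$. When $w = 0$ one has $\ker(T^* - \overline w) = \ker T^*$, and the identity $z = E_0^*(Pz)$ established above, applied to $z \in \ker T^*$ (where $Pz = z$), gives $E_0^* z = z$; thus $E_0^* = I_{\ker T^*}$ and $E_0 E_0^* = I_{\ker T^*}$.
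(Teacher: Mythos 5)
Your proposal is correct, and while it reaches the same conclusions, its key steps differ from the paper's. You prove $(v)$ first, directly from the relation $T^np(T)=q(T)$ with $q(z)=z^np(z)$ plus density, and then derive the inclusion $E_w^*(\ker T^*)\subseteq\ker(T^*-\overline w)$ from it; the paper goes the other way, proving that inclusion by a direct adjoint computation against $p(T)$ and later deducing $(v)$ from $(ii)$. The crux, the reverse inclusion in $(ii)$, is where the routes genuinely diverge: the paper shows $E_w^*$ is expansive (using $E_w|_{\ker T^*}=I$, so $\|E_w^*x\|\geqslant\|x\|$), hence injective with closed range, and then proves the range is dense in $\ker(T^*-\overline w)$ by taking $h$ orthogonal to it, showing $E_wh=0$, and passing to the limit along $p_n(T)\to h$; you instead exhibit an explicit preimage, $z=E_w^*\big(P_{\ker T^*}z\big)$ for every $z\in\ker(T^*-\overline w)$, verified on the dense set of polynomial vectors, and get bounded invertibility from injectivity (adjoint of a surjection) plus the open mapping theorem. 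Your argument is slicker and in effect identifies $(E_w^*)^{-1}$ with $P_{\ker T^*}|_{\ker(T^*-\overline w)}$, which connects nicely with Theorem \ref{bpe-thm}, where invertibility of $P_w|_{\ker T^*}$ characterizes $bpe(T)$; the paper's expansivity argument buys the quantitative lower bound $\|E_w^*x\|\geqslant\|x\|$ without invoking the open mapping theorem. For $(i)$ you give a self-contained proof (the Pythagorean estimate $\|p(T)\|^2=\|p(0)\|^2+\|\sum_{n\geqslant1}T^nx_n\|^2$ for $0\in bpe(T)$, and $bpe(T)\subseteq\overline{\sigma_p(T^*)}$ from $(ii)$ together with $\ker T^*\neq\{0\}$), whereas the paper cites \cite{T}; parts $(iii)$, $(iv)$ and the identity $E_0^*=I_{\ker T^*}$ are handled essentially as in the paper. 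One trivial slip: in your chain for $(ii)$ the justification ``$p(w)=E_wx\in\ker T^*$'' supports the third equality $\langle p(w),z\rangle=\langle E_wx,P_{\ker T^*}z\rangle$ (replacing $z$ by $P_{\ker T^*}z$), not the fourth, which is just the definition of the adjoint.
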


\begin{proof}
The proof of (i) goes exactly along the lines of the proofs of \cite[Proposition 2.4(i),(ii)]{T} with minor modifications. To see (ii), let $w \in bpe(T)$. For $p \in \mathcal P(\mathbb C, \ker T^*)$ and $x \in \ker T^*$, we have 
\beqn
\inp{T^* E_w^* x}{p(T)} &=& \inp{E_w^*x}{T p(T)} = \inp{x}{E_w(T p(T))} = \inp{x}{w p(w)} \\
&=&  \inp{\overline{w} x}{p(w)} = \inp{\overline{w} E_w^*x}{p(T)}.
\eeqn
Since $\{p(T) : \mathcal P(\mathbb C, \ker T^*)\}$ is a dense subspace of $\mathcal H$, it follows that $T^* E_w^* x = \overline{w} E_w^*x$. Thus $E_w^*(\ker T^*) \subseteq \ker(T^* - \overline{w})$. Note that $E_w^* : \ker T^* \rar \ker(T^* - \overline{w})$ is expansive. Indeed, for each $x \in \ker T^*$, we have
\beqn
\|x\|^2 = \inp{x}{E_w x} = \inp{E_w^* x}{x} \leqslant \|E_w^* x\| \|x\|.
\eeqn
Thus $E_w^*$ is injective and has closed range. Suppose that there exists $h \in \ker(T^* - \overline{w})$ such that $\inp{E_w^* x}{h} = 0$ for all $x \in \ker T^*$. This, in turn, implies that $\inp{x}{E_w h} = 0$ for all $x \in \ker T^*$, and hence, $E_w h = 0$. Also, for any $p(z) = \sum_{n=0}^k x_n z^n \in \mathcal P(\mathbb C, \ker T^*)$, we get
\beq\label{h-pw}
\Big \langle h, \sum_{n=0}^k T^n x_n \Big \rangle = \sum_{n=0}^k \inp{T^{*n} h}{x_n} = \sum_{n=0}^k \inp{\overline{w}^n h}{x_n} = \inp{h}{p(w)}.
\eeq
Let $\{p_n\}_{n \geqslant 1}$ be a sequence of polynomials in $\mathcal P(\mathbb C, \ker T^*)$ such that $p_n(T) \rar h$ as $n \rar \infty$. Then
\beqn
\|h\|^2 = \lim_{n \rar \infty} \inp{h}{p_n(T)} \overset{\eqref{h-pw}}= \lim_{n \rar \infty} \inp{h}{p_n(w)} = \inp{h}{E_w h} = 0.
\eeqn
Thus $E_w^*(\ker T^*) = \ker(T^* - \overline{w})$ and hence (ii) stands verified.

For (iii), observe that for each $w \in bpe(T)$, we have
\beqn
\ker E_w = {\ran\, E^*_w}^\bot \overset{(ii)}= \ker(T^* - \overline{w})^\bot = \mbox{cl}\, \ran\,(T - w).
\eeqn

To see (iv), let $w \in bpe(T)$. Note that $E_w|_{\ker(T^* - \overline{w})} : \ker(T^* - \overline{w}) \rar \ker T^*$ is an invertible operator by virtue of (iii). Further, it follows from (ii) that $E_w^* : \ker T^* \rar \ker(T^* - \overline{w})$ is invertible. Thus $E_w E^*_w$ is invertible on $\ker T^*$. For the moreover part, let $x \in \ker T^*$ and $p(z) = \sum_{n=0}^k x_n z^n \in \mathcal P(\mathbb C, \ker T^*)$. Then we get
\beqn
\inp{E^*_0 x}{p(T)} = \inp{x}{E_0 p(T)} = \inp{x}{p(0)} = \inp{x}{p(T)}.
\eeqn
Since $\{p(T) : p \in \mathcal P(\mathbb C, \ker T^*)\}$ is a dense subspace of $\mathcal H$, it follows that $E^*_0 x = x$ for all $x \in \ker T^*$. This gives that $E_0 E^*_0 x = x$ for all $x \in \ker T^*$, and hence, establishes (iv).

The conclusion of (v) was derived in the proof of \cite[Proposition 2.2]{T}. Here we provide a different proof. Suppose that $w \in bpe(T)$. Then $E_w : \mathcal H \rar \ker T^*$ is continuous. Let $x \in \mathcal H$. Then for each $n \in \mathbb N$ and $y \in \ker T^*$, we have
\beqn
\inp{E_w T^n x}{y} = \inp{x}{T^{*n} E^*_w y} = \inp{x}{\overline{w}^n E^*_w y} = \inp{w^n E_w x}{y},
\eeqn
where in the second equality, we used (ii). Thus, we must have $E_w T^n x = w^n E_w x$ for each $n \in \mathbb N$. This completes the proof.
\end{proof}

A bounded linear operator $T$ on a complex separable Hilbert space $\mathcal H$ is said to be {\it circular} if for each real number $\theta$ there exists a unitary $U_\theta$ on $\mathcal H$ such that $U_\theta T = e^{i\theta} T U_\theta$. Circular operators were introduced and studied in \cite{AHHK}. Later, Gellar \cite{G} characterized all the circular operators and proved that an operator is circular if and only if it is an operator-valued shift \cite[Proposition 1]{G}. The following proposition shows that if $T$ is circular, then $bpe(T)$ has circular symmetry.

\begin{proposition}\label{circular symmetry}
Let $T$ be a bounded linear operator on a complex separable Hilbert space $\mathcal H$ with the wandering subspace property. If $T$ is circular, then the set $bpe(T)$ of all bounded point evaluations for $T$ has circular symmetry. 
\end{proposition}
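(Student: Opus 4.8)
The plan is to exploit the intertwining relation defining circularity to transport a bounded point evaluation around the circle $\{z : |z| = |w|\}$. Fix $\theta \in \mathbb R$ and a unitary $U_\theta$ on $\mathcal H$ with $U_\theta T = e^{i\theta} T U_\theta$. First I would record two elementary consequences: iterating the relation gives $U_\theta T^n = e^{in\theta} T^n U_\theta$ for every $n \in \mathbb N$, and taking adjoints and conjugating yields $U_\theta T^* U_\theta^* = e^{-i\theta} T^*$, whence $\ker(U_\theta T^* U_\theta^*) = \ker T^*$, i.e. $U_\theta(\ker T^*) = \ker T^*$. Thus $U_\theta$ restricts to a unitary operator on $\ker T^*$, and it induces a linear bijection of $\mathcal P(\mathbb C, \ker T^*)$ onto itself (coefficient-wise).

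Next, for a $\ker T^*$-valued polynomial $p(z) = \sum_{n=0}^k x_n z^n$ I would introduce the polynomial $q(z) := \sum_{n=0}^k e^{in\theta}(U_\theta x_n)\, z^n$, whose coefficients again lie in $\ker T^*$. Using $U_\theta T^n = e^{in\theta} T^n U_\theta$, a direct computation gives $U_\theta\, p(T) = \sum_n e^{in\theta} T^n(U_\theta x_n) = q(T)$, so that $\|q(T)\| = \|p(T)\|$; and evaluating the polynomial, $q(e^{-i\theta} w) = \sum_n (U_\theta x_n)\, w^n = U_\theta\, p(w)$, so that $\|q(e^{-i\theta} w)\| = \|p(w)\|$. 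Moreover the correspondence $p \mapsto q$ is onto $\mathcal P(\mathbb C, \ker T^*)$: given $q$ with coefficients $y_n \in \ker T^*$, the polynomial $p$ with coefficients $x_n := e^{-in\theta} U_\theta^* y_n \in \ker T^*$ is sent to it.

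Finally I would assume $w \in bpe(T)$, with a constant $c$ such that $\|p(w)\| \leqslant c\,\|p(T)\|$ for all $p \in \mathcal P(\mathbb C, \ker T^*)$. For an arbitrary $q \in \mathcal P(\mathbb C, \ker T^*)$, choosing the associated $p$ gives $\|q(e^{-i\theta} w)\| = \|p(w)\| \leqslant c\,\|p(T)\| = c\,\|q(T)\|$, which is exactly the statement $e^{-i\theta} w \in bpe(T)$. Since $\theta \in \mathbb R$ was arbitrary (replace $\theta$ by $-\theta$), we conclude $e^{i\theta} w \in bpe(T)$ for every $\theta \in \mathbb R$, i.e. $bpe(T)$ has circular symmetry.

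I do not anticipate a genuine obstacle: the only points requiring care are the bookkeeping of the phase factors $e^{in\theta}$ when pushing $U_\theta$ through powers of $T$, and the observation that $U_\theta$ preserves $\ker T^*$ — the latter is what makes $q$ an admissible polynomial and the map $p \mapsto q$ a bijection of $\mathcal P(\mathbb C, \ker T^*)$. Everything else reduces to a short, routine calculation.
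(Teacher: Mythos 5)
Your proposal is correct and rests on essentially the same mechanism as the paper's proof: the unitary $U_\theta$ restricts to a unitary on $\ker T^*$ and intertwines the powers of $T$ up to the phases $e^{in\theta}$, so a bounded point evaluation is transported by twisting the coefficients of the test polynomials. The only difference is organizational: the paper first establishes the intermediate identity $bpe(T) = bpe(e^{i\theta}T)$ and then rotates the point, whereas you merge both steps into the single substitution $p \mapsto q$ with $q(T) = U_\theta\, p(T)$ and $q(e^{-i\theta}w) = U_\theta\, p(w)$ — a streamlining of the same argument rather than a different route.
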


\begin{proof}
Suppose that $T$ is circular. Let $\theta$ be an arbitrary but fixed real number. We first show that 
\beq\label{bpe-eq}
bpe(T) = bpe(e^{i\theta} T). 
\eeq
The proof of \eqref{bpe-eq} follows by imitating the arguments of the proof of \cite[Proposition 2.3]{T} but we include the details for the sake of completeness. To this end, let $U_\theta$ be the unitary on $\mathcal H$ such that $U_\theta T = e^{i\theta} T U_\theta$. Since $\ker T^* = \ker (e^{i\theta} T)^*$, we must have $U_\theta (\ker T^*) = \ker T^*$. Now suppose that $w \in bpe(T)$. Let $p(z) = \sum_{n=0}^k x_n z^n \in \mathcal P(\mathbb C, \ker T^*)$. Then $p(z) = \sum_{n=0}^k U_\theta y_n z^n $, $y_n \in \ker T^*$. Thus, we have 
\beqn
\|p(w)\| &=& \Big\|\sum_{n=0}^k U_\theta y_n w^n\Big\| = \Big\|\sum_{n=0}^k y_n w^n\Big\| \leqslant c \Big\|\sum_{n=0}^k T^n y_n \Big\|\\
&=& c \Big\|\sum_{n=0}^k U_\theta T^n y_n \Big\| = c \Big\|\sum_{n=0}^k e^{in\theta}T^n U_\theta y_n \Big\| = c \Big\|\sum_{n=0}^k e^{in\theta}T^n x_n \Big\|\\
&=& c \|p(e^{i\theta} T)\|.
\eeqn
 This shows that $w \in bpe(e^{i\theta} T)$, and hence, $bpe(T) \subseteq bpe(e^{i\theta} T)$. The other way inclusion can be obtained similarly. This completes the verification of \eqref{bpe-eq}.
 
 Further, suppose that $w \in bpe(T)$. Let $p(z) = \sum_{n=0}^k x_n z^n \in \mathcal P(\mathbb C, \ker T^*)$. Then $p(e^{i\theta} w) = \sum_{n=0}^k e^{in\theta} w^n x_n = \tilde p(w)$, where $\tilde p(z) = \sum_{n=0}^k y_n z^n$ with $y_n = e^{in\theta} x_n$. Thus, we get
\beqn
 \|p(e^{i\theta} w)\| = \|\tilde p(w)\| \leqslant c \Big\|\sum_{n=0}^k T^n y_n \Big\| = c \Big\|\sum_{n=0}^k e^{in\theta} T^n x_n \Big\| = c \|p(e^{i\theta} T)\|.
\eeqn
This shows that $e^{i\theta} w \in bpe(e^{i\theta} T) \overset{\eqref{bpe-eq}}= bpe(T)$, and hence, completes the proof.
\end{proof}

We are now ready to give a complete description of $bpe(T)$. The following theorem extends \cite[Theorem 4]{GMZ}. 

\begin{theorem}\label{bpe-thm}
Let $T$ be a bounded linear operator on a complex separable Hilbert space $\mathcal H$ with the wandering subspace property. Let $\{x_n : 1 \leqslant n \leqslant \dim \ker T^*\}$ be an orthonormal basis of $\ker T^*$. For $w \in \mathbb C$, let $P_w$ denote the orthogonal projection of $\mathcal H$ onto $\ker (T^* - \overline{w})$. Then the following statements are equivalent:
\begin{enumerate}
\item[$(i)$] A complex number $w$ is a bounded point evaluation for $T$.
\item[$(ii)$] $P_w|_{\ker T^*} : \ker T^* \rar \ker (T^* - \overline{w})$ is invertible.
\item[$(iii)$] There exists a sequence of vectors $\{y_n : 1 \leqslant n \leqslant \dim \ker T^*\}$ in $\ker (T^* - \overline{w})$ such that $\inp{x_i}{y_j} = \delta_{ij}\ (\mbox{the Kronecker delta})$ for all $i,j = 1, \ldots, \dim \ker T^*$.
\end{enumerate}
\end{theorem}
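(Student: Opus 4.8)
The plan is to establish the cycle of implications $(i)\Rightarrow(ii)\Rightarrow(iii)\Rightarrow(i)$, using Proposition \ref{bpe-prop} to dispatch the first implication almost for free. Indeed, suppose $w \in bpe(T)$. By Proposition \ref{bpe-prop}(iii), $\ker E_w = \mbox{cl\,ran}(T-w) = \ker(T^* - \overline w)^\bot$, so $E_w$ restricted to $\ker(T^* - \overline w)$ is injective, and by Proposition \ref{bpe-prop}(ii) its range is all of $\ker T^*$; hence $E_w|_{\ker(T^*-\overline w)} : \ker(T^*-\overline w) \rar \ker T^*$ is an invertible operator (bounded below by the expansivity estimate in the proof of (ii), and surjective). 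Now for $x \in \ker T^*$, write $P_w x = u$ with $u \in \ker(T^*-\overline w)$; then $x - u \perp \ker(T^*-\overline w) = \mbox{cl\,ran}(T-w) = \ker E_w$, so $E_w x = E_w u$. Since $E_w|_{\ker T^*} = I_{\ker T^*}$, we get $x = E_w u = E_w P_w x$, i.e. $E_w|_{\ker(T^*-\overline w)} \circ P_w|_{\ker T^*} = I_{\ker T^*}$; combined with injectivity/surjectivity considerations this forces $P_w|_{\ker T^*}$ to be invertible onto $\ker(T^*-\overline w)$, giving $(ii)$.

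For $(ii)\Rightarrow(iii)$, assume $Q := P_w|_{\ker T^*} : \ker T^* \rar \ker(T^*-\overline w)$ is invertible with bounded inverse $Q^{-1}$. Set $y_j := (Q^{-1})^* x_j \in \ker(T^*-\overline w)$ for $1 \le j \le \dim\ker T^*$ (here $(Q^{-1})^*$ is the adjoint of $Q^{-1}: \ker(T^*-\overline w) \rar \ker T^*$, mapping $\ker T^* \rar \ker(T^*-\overline w)$). Then for any $i,j$, since $x_i \in \ker T^*$ and $P_w$ is self-adjoint,
\beqn
\inp{x_i}{y_j} = \inp{x_i}{(Q^{-1})^* x_j} = \inp{Q^{-1} x_i}{x_j} = \inp{Q^{-1} x_i}{x_j}.
\eeqn
Wait — I need $\inp{x_i}{y_j}$ where the pairing is in $\mathcal H$, and $\inp{x_i}{y_j}_{\mathcal H} = \inp{P_w x_i}{y_j}_{\mathcal H}$ is false in general; rather $\inp{x_i}{y_j}_{\mathcal H} = \inp{x_i}{P_w y_j}_{\mathcal H} = \inp{x_i}{y_j}_{\mathcal H}$ trivially since $y_j$ already lies in the range of $P_w$. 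The correct route: $y_j \in \ker(T^*-\overline w)$, so $\inp{x_i}{y_j} = \inp{P_w x_i}{y_j} + \inp{(I-P_w)x_i}{y_j}$, and the second term vanishes because $(I-P_w)x_i \perp \ran P_w \ni y_j$; hence $\inp{x_i}{y_j} = \inp{Q x_i}{y_j} = \inp{Q x_i}{(Q^{-1})^* x_j} = \inp{Q^{-1} Q x_i}{x_j} = \inp{x_i}{x_j} = \delta_{ij}$, as desired.

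For $(iii)\Rightarrow(i)$, suppose such biorthogonal vectors $\{y_j\}$ exist. Given $p(z) = \sum_{n=0}^k a_n z^n \in \mathcal P(\mathbb C, \ker T^*)$, expand each coefficient $a_n = \sum_j \inp{a_n}{x_j} x_j$, so $p(w) = \sum_j \big(\sum_n \inp{a_n}{x_j} w^n\big) x_j$ and $\|p(w)\|^2 = \sum_j \big|\sum_n \inp{a_n}{x_j} w^n\big|^2$. The key computation is to recover the scalar $\sum_n \inp{a_n}{x_j} w^n$ as an inner product of $p(T)$ with $y_j$: since $y_j \in \ker(T^*-\overline w)$ we have $T^{*n} y_j = \overline w^{\,n} y_j$, hence
\beqn
\inp{p(T)}{y_j} = \Big\langle \sum_{n=0}^k T^n a_n,\, y_j \Big\rangle = \sum_{n=0}^k \inp{a_n}{T^{*n} y_j} = \sum_{n=0}^k w^n \inp{a_n}{y_j}.
\eeqn
Writing $a_n = \sum_i \inp{a_n}{x_i} x_i$ and using $\inp{x_i}{y_j} = \delta_{ij}$, this equals $\sum_{n} w^n \inp{a_n}{x_j}$, exactly the $j$-th coefficient of $p(w)$. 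Therefore $\|p(w)\|^2 = \sum_j |\inp{p(T)}{y_j}|^2 \le \big(\sum_j \|y_j\|^2\big)\|p(T)\|^2$ by Cauchy--Schwarz — but this bound is only finite when $\dim\ker T^*$ is finite. The main obstacle is precisely the infinite-dimensional case: one cannot bound $\sum_j \|y_j\|^2$ naively. To handle it, I would instead package the $y_j$'s into a single bounded operator. Define $R : \ker T^* \rar \mathcal H$ on the orthonormal basis by... actually the clean fix is to observe that $(iii)$ together with $(ii)$'s equivalence lets us use the bounded operator $E_w$ directly: the map $x \mapsto \sum_j \inp{x}{y_j} x_j$ should be shown bounded, and this boundedness is exactly what $(iii)$, properly interpreted as the invertibility of $P_w|_{\ker T^*}$ (via the already-proved $(ii)\Leftrightarrow(iii)$), supplies. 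Thus in the write-up I would prove $(ii)\Leftrightarrow(iii)$ first as an operator statement, then prove $(ii)\Rightarrow(i)$ using the bounded inverse $Q^{-1}$: set $E := $ the operator $\mathcal H \rar \ker T^*$ that is $Q^{-1}P_w$ on $\ker(T^*-\overline w) \oplus \ker(T^*-\overline w)^\bot$ (sending the complement to $0$), check $E p(T) = p(w)$ by the displayed eigenvector computation, and conclude $\|p(w)\| = \|E p(T)\| \le \|E\|\,\|p(T)\|$ with $\|E\| = \|Q^{-1}\| < \infty$. That reduces everything to routine verification and sidesteps the convergence issue entirely.
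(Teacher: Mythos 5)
Your implications (i)$\Rightarrow$(ii), (ii)$\Rightarrow$(iii) and (ii)$\Rightarrow$(i) are correct and are essentially the paper's own arguments in operator form: the identity $E_w|_{\ker(T^*-\overline w)}\circ P_w|_{\ker T^*}=I_{\ker T^*}$, combined with $\ker E_w=\textup{cl\,ran}(T-w)=\ker(T^*-\overline w)^\bot$ from Proposition \ref{bpe-prop}, gives (i)$\Rightarrow$(ii) (your parenthetical appeal to the expansivity of $E_w^*$ is not quite the right justification for bounded below, but surjectivity of $E_w|_{\ker(T^*-\overline w)}$ already follows from your own identity, and injectivity from Proposition \ref{bpe-prop}(iii), so invertibility follows); the choice $y_j:=(Q^{-1})^*x_j$ is a cleaner version of the paper's matrix construction for (ii)$\Rightarrow$(iii); and $E:=Q^{-1}P_w$ with $Ep(T)=p(w)$ reproduces exactly the paper's estimate $\|p(w)\|\leqslant\|(P_w|_{\ker T^*})^{-1}\|\,\|P_w\|\,\|p(T)\|$.

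The genuine gap is that nothing in your write-up returns from (iii). Your final plan establishes (i)$\Rightarrow$(ii), (ii)$\Rightarrow$(iii) and (ii)$\Rightarrow$(i), which gives (i)$\Leftrightarrow$(ii)$\Rightarrow$(iii) but never shows that (iii) implies anything; the step where you invoke ``the already-proved (ii)$\Leftrightarrow$(iii)'' is circular, since only the direction (ii)$\Rightarrow$(iii) was proved, and (iii)$\Rightarrow$(ii) is precisely what is missing. You correctly diagnosed that the naive bound $\|p(w)\|^2\leqslant\big(\sum_j\|y_j\|^2\big)\|p(T)\|^2$ fails when $\dim\ker T^*=\infty$, but the repair is not to bound $\sum_j\|y_j\|^2$. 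The paper instead uses the vectors $y_j$ only to prove injectivity of $P_w|_{\ker T^*}$: if $P_wx=0$ with $x\in\ker T^*$, then for every $j$ one has $0=\inp{P_wx}{y_j}=\inp{x}{y_j}=\inp{x}{x_j}$ (using $y_j\in\ker(T^*-\overline w)$ and the biorthogonality), hence $x=0$. This is coupled with the surjectivity $P_w(\ker T^*)=\ker(T^*-\overline w)$, which the paper obtains by applying $P_w$ to the identity $\mathcal H=\bigvee_{n\in\mathbb N}(T-w)^n(\ker T^*)$; note this identity holds for \emph{every} $w\in\mathbb C$ by the wandering subspace property, since $\mbox{span}\{(T-w)^nx : n\in\mathbb N,\ x\in\ker T^*\}=\{p(T):p\in\mathcal P(\mathbb C,\ker T^*)\}$, so it is available under hypothesis (iii) alone. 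Together these give (iii)$\Rightarrow$(ii), and your (ii)$\Rightarrow$(i) then closes the cycle; without such an argument the equivalence with (iii) remains unproved.
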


\begin{proof}
We first establish the equivalence of (i) and (ii). To this end, let $w \in bpe(T)$. Then it can be easily shown that  
\beqn
\mathcal H = \bigvee_{n \in \mathbb N} (T - w)^n(\ker T^*).
\eeqn
Applying $P_w$ on both sides of the above identity, we get that $\ker (T^* - \overline{w}) = P_w (\ker T^*)$. Thus, $P_w|_{\ker T^*}$ is onto. To see that $P_w|_{\ker T^*}$ is one-one, assume that $P_w x = 0$ for some $x \in \ker T^*$. Then $x \in \textup{cl\,ran}(T - w)$. Let $\{y_n\}_{n \geqslant 1}$ be a sequence in $\mathcal H$ such that $(T - w) y_n \rar x$ as $n \rar \infty$. This, together with Proposition \ref{bpe-prop}(v), gives that
\beqn
0 = E_w(T - w) y_n \rar E_w x = x \mbox{ as } n \rar \infty.
\eeqn
Thus $x = 0$.

Conversely, suppose that $P_w|_{\ker T^*} : \ker T^* \rar \ker (T^* - \overline{w})$ is invertible for some $w \in \mathbb C$. Let $p(z) = \sum_{n=0}^k x_n z^n \in \mathcal P(\mathbb C, \ker T^*)$. Then for all $y \in \ker (T^* - \overline{w})$, we have
\beqn
\inp{p(T)}{y} = \Big\langle \sum_{n=0}^k T^n x_n, y \Big\rangle = \sum_{n=0}^k \inp{T^n x_n}{y} = \sum_{n=0}^k \inp{w^n x_n}{y} = \inp{p(w)}{y}.
\eeqn
The above equality gives that $\|P_w p(T)\| = \|P_w p(w)\|$. Since $P_w|_{\ker T^*}$ is invertible, we get that
\beqn
\|p(w)\| &=& \|(P_w|_{\ker T^*})^{-1} P_w p(w)\| \leqslant \|(P_w|_{\ker T^*})^{-1}\| \|P_w p(T)\|\\
&\leqslant& \|(P_w|_{\ker T^*})^{-1}\| \|P_w\| \|p(T)\|.
\eeqn
Hence, $w \in bpe(T)$. 

To see the equivalence of (ii) and (iii), suppose that $P_w|_{\ker T^*} : \ker T^* \rar \ker (T^* - \overline{w})$ is invertible for some $w \in \mathbb C$. Let $\{h_n : 1 \leqslant n \leqslant \dim \ker T^*\}$ be an orthonormal basis of $\ker (T^* - \overline{w})$. Let $(a_{ij})_{1 \leqslant i,j \leqslant \dim \ker T^*}$ be the matrix representation of $(P_w|_{\ker T^*})^{-1}$ with respect to the bases $\{h_n : 1 \leqslant n \leqslant \dim \ker T^*\}$ and $\{x_n : 1 \leqslant n \leqslant \dim \ker T^*\}$ of $\ker (T^* - \overline{w})$ and $\ker T^*$ respectively. Set
\beqn
y_j := \sum_{k=1}^{\dim \ker T^*} a_{kj} h_k, \quad j=1, \ldots, \dim \ker T^*.
\eeqn
Then it can be easily verified that $y_j \in \ker (T^* - \overline{w})$ and $\inp{x_i}{y_j} = \delta_{ij}$ for all $i, j = 1, \ldots, \dim \ker T^*$.

Conversely, assume that (iii) holds. It was already seen in the beginning of the proof that $P_w (\ker T^*) = \ker (T^* - \overline{w})$. Thus it only remains to show that $P_w|_{\ker T^*}$ is one-one. To this end, let $P_w x = 0$ for some $x \in \ker T^*$. Then for all $j = 1, \ldots, \dim \ker T^*$, we get
\beqn
0 = \inp{P_w x}{y_j} = \inp{x}{y_j} = \Big\langle \sum_{i=1}^{\dim \ker T^*} \inp{x}{x_i} x_i, y_j \Big\rangle = \inp{x}{x_j}.
\eeqn
Hence $x=0$. This completes the proof.
\end{proof}

We characterize below $bpe(T)$ for a left-invertible operator $T$ with the wandering subspace property (cf. \cite[Proposition 5.1]{GKT}). 

\begin{theorem}\label{bpe-left-thm}
Let $T$ be a left-invertible operator on a complex separable Hilbert space $\mathcal H$ and let $T'$ denote the Cauchy dual of $T$. Suppose that $T$ has the wandering subspace property. For each $w \in \mathbb C$ and $n \in \mathbb N$, set
\beqn
\mathcal M_n := \bigvee_{k=0}^n T^k(\ker T^*) \quad \mbox{and} \quad S_n(w) := \sum_{k=0}^n \overline{w}^k T'^k.
\eeqn
Then $w \in bpe(T)$ if and only if $\displaystyle\sup_{n \in \mathbb N}\|P_{\mathcal M_n} S_n(w)|_{ker T^*}\| < \infty$. Moreover, if $w \in bpe(T)$, then $\|E_w^*\| = \displaystyle\sup_{n \in \mathbb N}\|P_{\mathcal M_n} S_n(w)|_{ker T^*}\|$.
\end{theorem}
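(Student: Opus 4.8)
\emph{Approach.} The key observation is that, because $T$ is left-invertible, one can read off the coefficients of a vector of $\mathcal M_n$ by applying powers of $T'^*$ and projecting onto $\ker T^*$. This turns the truncated evaluation map $p(T)\mapsto p(w)$ on $\mathcal M_n$ into (the adjoint of) $P_{\mathcal M_n}S_n(w)|_{\ker T^*}$, after which the uniform‑boundedness criterion and the norm identity fall out. I would first record the elementary facts about $T'$. Since $T$ is left‑invertible, $\ran T$ is closed, $T^*T$ is invertible, and $T'^*=(T^*T)^{-1}T^*$ satisfies $T'^*T=I$. Three consequences will be used: (a) $T'^*x=0$ for $x\in\ker T^*$ (as $T^*x=0$); (b) $P_{\ker T^*}T^j=0$ for every $j\ge 1$, since $T^j(\mathcal H)\subseteq\ran T=(\ker T^*)^{\perp}$; and (c) for $x\in\ker T^*$, $T'^{*j}T^kx=T^{k-j}x$ when $0\le j\le k$ and $T'^{*j}T^kx=0$ when $j>k$ (iterate $T'^*T=I$, then invoke (a)).

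\emph{The truncated evaluation.} For $p(z)=\sum_{k=0}^n x_kz^k\in\mathcal P(\mathbb C,\ker T^*)$, expanding and using (c),
\[
S_n(w)^*p(T)=\Big(\sum_{j=0}^n w^jT'^{*j}\Big)\Big(\sum_{k=0}^n T^kx_k\Big)=\sum_{k=0}^n\sum_{j=0}^k w^jT^{k-j}x_k ,
\]
and then applying $P_{\ker T^*}$ annihilates every term except $j=k$ by (b), leaving $P_{\ker T^*}S_n(w)^*p(T)=\sum_{k=0}^n w^kx_k=p(w)$. Two things follow from this identity: $p(w)$ depends on $p$ only through $p(T)$ (so no well‑definedness issue arises), and the evaluation $p(T)\mapsto p(w)$ on the dense subspace $\{p(T):\deg p\le n\}$ of $\mathcal M_n$ is the restriction of the \emph{bounded} operator $B_n:=P_{\ker T^*}S_n(w)^*|_{\mathcal M_n}\colon\mathcal M_n\to\ker T^*$ (note $S_n(w)$ is a finite sum of bounded operators). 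In particular $\|B_n\|=\sup\{\|p(w)\|:\deg p\le n,\ \|p(T)\|=1\}$.

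\emph{Adjoint and assembly.} Viewing $B_n$ as the composition of the inclusion $\mathcal M_n\hookrightarrow\mathcal H$, the operator $S_n(w)^*$ on $\mathcal H$, and the co‑restriction $P_{\ker T^*}\colon\mathcal H\to\ker T^*$, its Hilbert‑space adjoint is $P_{\mathcal M_n}S_n(w)|_{\ker T^*}$, so $\|B_n\|=\|P_{\mathcal M_n}S_n(w)|_{\ker T^*}\|$. Now if $\sup_n\|B_n\|=M<\infty$, then for an arbitrary $p$, choosing $n\ge\deg p$ gives $\|p(w)\|\le M\|p(T)\|$, i.e.\ $w\in bpe(T)$. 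Conversely, if $w\in bpe(T)$ then $B_n$ and $E_w|_{\mathcal M_n}$ agree on a dense subspace of $\mathcal M_n$, hence coincide, so $\|B_n\|\le\|E_w\|<\infty$. Since the $\mathcal M_n$ increase and $\overline{\bigcup_n\mathcal M_n}=\mathcal H$ by the wandering subspace property, $\sup_n\|B_n\|=\sup\{\|p(w)\|:\|p(T)\|=1\}=\|E_w\|=\|E_w^*\|$, the last equality holding because an operator and its adjoint have equal norm (cf.\ Proposition \ref{bpe-prop}(ii)). Combined with $\|B_n\|=\|P_{\mathcal M_n}S_n(w)|_{\ker T^*}\|$ this gives both the criterion and the norm formula.

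\emph{Main obstacle.} There is no deep difficulty here; the entire argument is driven by the single identity $P_{\ker T^*}S_n(w)^*p(T)=p(w)$, which itself rests only on $T'^*T=I$ and $\ker T^*\perp\ran T$. The one point to treat carefully is ensuring that $\|B_n\|$ is identified with exactly $\|P_{\mathcal M_n}S_n(w)|_{\ker T^*}\|$ (and not a cousin of it) and that the evaluation really is well defined on $\mathcal M_n$; both become automatic once that coefficient‑extraction identity is established.
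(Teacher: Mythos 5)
Your proof is correct and is essentially the paper's argument in adjoint form: your coefficient-extraction identity $P_{\ker T^*}S_n(w)^*p(T)=p(w)$ is exactly the paper's key identity $\langle S_n(w)x,\,p(T)\rangle=\langle x,\,p(w)\rangle$, and recognizing $P_{\mathcal M_n}S_n(w)|_{\ker T^*}$ as the adjoint of your truncated evaluation $B_n$ plays the same role as the paper's observation that $P_{\mathcal M_n}S_n(w)x=P_{\mathcal M_n}E_w^*x$ on $\ker T^*$. The only cosmetic difference is that you get the norm equality $\|E_w^*\|=\sup_n\|P_{\mathcal M_n}S_n(w)|_{\ker T^*}\|$ from the identification $B_n=E_w|_{\mathcal M_n}$ together with density of $\bigcup_n\mathcal M_n$, whereas the paper proves the two inequalities separately by a limiting argument.
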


\begin{proof}
Suppose that $w \in bpe(T)$. Then $E_w : \mathcal H \rar \ker T^*$ is continuous. Let $n \in \mathbb N$. Then for each $\ker T^*$-valued polynomial $p$ with degree up to $n$ and for each $x \in \ker T^*$, we get
\beq\label{Sn-Ew}
\inp{S_n(w) x}{p(T)} = \inp{x}{p(w)} = \inp{E_w^*x}{p(T)}. 
\eeq
Let $y \in \mathcal M_n$. Then there exists a sequence of $\ker T^*$-valued polynomials $p_m$ with degree up to $n$ such that $p_m(T) \rar y$ as $m \rar \infty$. Thus for each $x \in \ker T^*$, we get
\beqn
\inp{S_n(w) x}{y} = \lim_{m \rar \infty} \inp{S_n(w) x}{p_m(T)} \overset{\eqref{Sn-Ew}}= \lim_{m \rar \infty} \inp{E_w^*x}{p_m(T)} = \inp{E_w^*x}{y}.
\eeqn
This gives that $P_{\mathcal M_n}(S_n(w)x - E_w^*x) = 0$ for all $n \in \mathbb N$. Consequently, we obtain
\beq\label{norm-Ew}
\|P_{\mathcal M_n}S_n(w)x\| \leqslant \|E_w^*x\| \mbox{ for all } n \in \mathbb N \mbox{ and for all } x \in \ker T^*.
\eeq
Application of the uniform boundedness principle in the above inequality yields the desired conclusion.

Conversely, suppose that $w \in \mathbb C$ is such that $\sup_{n \in \mathbb N}\|P_{\mathcal M_n} S_n(w)|_{ker T^*}\| < \infty$. Let $p$ be any $\ker T^*$-valued polynomial. Then for any $n \geqslant \mbox{deg}\, p$, we have
\beqn
\|p(w)\| &=& \sup_{x \in \ker T^*, \ \|x\|=1} |\inp{x}{p(w)}| = \sup_{x \in \ker T^*, \ \|x\|=1} |\inp{S_n(w)x}{p(T)}| \\
&\leqslant& \|P_{\mathcal M_n} S_n(w)|_{ker T^*}\| \|p(T)\| \leqslant \Big(\sup_{n \in \mathbb N}\|P_{\mathcal M_n} S_n(w)|_{ker T^*}\|\Big) \|p(T)\|.
\eeqn
Thus $w \in bpe(T)$. 

For the moreover part, assume that $w \in bpe(T)$. Then \eqref{norm-Ew} implies that $\displaystyle\sup_{n \in \mathbb N}\|P_{\mathcal M_n} S_n(w)|_{ker T^*}\| \leqslant \|E_w^*\|$. To see the reverse inequality, let $y \in \mathcal H$ with $\|y\| = 1$. Then there exists a sequence $\{p_n(T)\}_{n \in \mathbb N}$ with $\|p_n(T)\| = 1$ for all $n \in \mathbb N$ such that $p_n(T) \rar y$ as $n \rar \infty$. For all $x \in \ker T^*$ with $\|x\| = 1$, we get
\beqn
|\inp{E_w^*x}{y}| &=& \lim_{n \rar \infty} |\inp{E_w^*x}{p_n(T)}| \overset{\eqref{Sn-Ew}}= \lim_{n \rar \infty} |\inp{S_{deg\, p_n}(w) x}{p_n(T)}|\\
&\leqslant& \lim_{n \rar \infty} \|P_{\mathcal M_{deg\, p_n}}S_{deg\, p_n}(w) x\| \leqslant \sup_{n \in \mathbb N}\|P_{\mathcal M_n} S_n(w)|_{ker T^*}\|.
\eeqn 
As a consequence, we get $\|E_w^*\| \leqslant \sup_{n \in \mathbb N}\|P_{\mathcal M_n} S_n(w)|_{ker T^*}\|$. This completes the proof. 
\end{proof}

The following corollary shows that the $\mbox{int}\, bpe(T)$ is non-empty for a left-invertible operator $T$ with the wandering subspace property. In fact, it shows that $bpe(T)$ contains an open disc centred at the origin which is in general bigger that $\mathbb D\big(0, r(T')^{-1}\big)$. To see this, we need the following definition: For $T \in \mathcal B(\mathcal H)$ and $x \in \mathcal H$, we set $r_T(x) : = \displaystyle \limsup_{n \rar \infty} \|T^n x\|^\frac{1}{n}$. In the literature, $r_T(x)$ is known as {\it local spectral radius} of $T$ at $x$ \cite{LN}. Clearly, $r_T(x) \leqslant r(T)$.  

\begin{corollary}\label{bpe-incl-cor}
Let $T$ be a left-invertible operator on a complex separable Hilbert space $\mathcal H$ and let $T'$ denote the Cauchy dual of $T$. Suppose that $T$ has the wandering subspace property. Then the open disc $\mathbb D(0, r)$ is contained in the $bpe(T)$, where $\displaystyle r := \inf_{x \in \ker T^*} \frac{1}{r_{T'}(x)}$. In particular, $\mathbb D\big(0, r(T')^{-1}\big) \subseteq bpe(T)$. Moreover, for each $w \in \mathbb D(0, r)$, we have $E_w^* x = \sum_{n \in \mathbb N} \overline{w}^n T'^n x$ for all $x \in \ker T^*$.  
\end{corollary}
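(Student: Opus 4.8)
The plan is to deduce the corollary from Theorem \ref{bpe-left-thm}, combined with the elementary biorthogonality relation between the powers of $T$ and those of its Cauchy dual on $\ker T^*$. Since $T$ is left-invertible we have $T'^*T = (T^*T)^{-1}T^*T = I$ and $T^*T' = T^*T(T^*T)^{-1} = I$, and moreover $\ran T' = \ran T = (\ker T^*)^\perp$. Iterating these two identities one checks that
\[
\langle T^j x,\, T'^k y\rangle = \delta_{jk}\,\langle x, y\rangle \qquad (x, y \in \ker T^*,\ j, k \in \mathbb N),
\]
the off-diagonal terms vanishing because, for $m \geqslant 1$, the vector $T^m x$ (respectively $T'^m y$) lies in $\ran T = \ran T'$, which is orthogonal to $\ker T^*$, while the diagonal terms reduce to $\langle x, y\rangle$ using $T^*T' = I$ repeatedly.

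Next I would fix $w$ with $|w| < r$. By the definition of $r$ we have $r^{-1} = \sup_{x \in \ker T^*} r_{T'}(x)$, so $|w|\, r_{T'}(x) < 1$ for every $x \in \ker T^*$; choosing $t$ with $r_{T'}(x) < t < |w|^{-1}$ gives $\|T'^n x\| \leqslant M_x\, t^n$ for all $n$ and some $M_x > 0$, whence $\sum_n |w|^n \|T'^n x\| \leqslant M_x(1 - |w|t)^{-1} < \infty$. Therefore the series $\xi_x := \sum_n \overline w^{\,n} T'^n x$ converges in $\mathcal H$ for each $x \in \ker T^*$, and in particular the partial sums $S_n(w)x = \sum_{k=0}^n \overline w^{\,k}T'^k x$ of Theorem \ref{bpe-left-thm} satisfy $\sup_n \|S_n(w)x\| < \infty$ for every $x \in \ker T^*$.

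To conclude that $w \in bpe(T)$, I would invoke the uniform boundedness principle: each $P_{\mathcal M_n} S_n(w)|_{\ker T^*}$ is a bounded operator from $\ker T^*$ into $\mathcal H$, and by the previous paragraph the family $\{P_{\mathcal M_n}S_n(w)|_{\ker T^*}\}_{n \in \mathbb N}$ is pointwise bounded (since $\|P_{\mathcal M_n}S_n(w)x\| \leqslant \|S_n(w)x\|$), so $\sup_n \|P_{\mathcal M_n}S_n(w)|_{\ker T^*}\| < \infty$ and Theorem \ref{bpe-left-thm} gives $w \in bpe(T)$. As $r_{T'}(x) \leqslant r(T')$ for all $x$, we get $r \geqslant r(T')^{-1}$, hence $\mathbb D\big(0, r(T')^{-1}\big) \subseteq \mathbb D(0,r) \subseteq bpe(T)$. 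For the last assertion, fix $x \in \ker T^*$; for any $\ker T^*$-valued polynomial $p(z) = \sum_j y_j z^j$, continuity of the inner product together with the biorthogonality relation gives $\langle \xi_x,\, p(T)\rangle = \sum_j \overline w^{\,j}\langle x, y_j\rangle = \langle x, p(w)\rangle = \langle E_w^* x,\, p(T)\rangle$; since $\{p(T) : p \in \mathcal P(\mathbb C, \ker T^*)\}$ is dense in $\mathcal H$, this forces $\xi_x = E_w^* x$.

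The only delicate point is the step in the third paragraph: there is no reason for $r_{T'}$ to be bounded away from $|w|^{-1}$ uniformly over the unit sphere of $\ker T^*$, so one cannot obtain a uniform bound on the norms $\|P_{\mathcal M_n}S_n(w)|_{\ker T^*}\|$ term by term from the local estimates; passing instead through the uniform boundedness principle — in the spirit of the forward direction of the proof of Theorem \ref{bpe-left-thm} — is what makes the argument go through. Everything else is bookkeeping with the two identities $T^*T' = I = T'^*T$.
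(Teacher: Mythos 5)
Your argument is correct and follows essentially the same route as the paper: pointwise convergence of the series $\sum_n \overline{w}^n T'^n x$ for $|w|<r$ (via the radius of convergence $1/r_{T'}(x)$), the uniform boundedness principle to pass from the pointwise bounds to $\sup_n\|P_{\mathcal M_n}S_n(w)|_{\ker T^*}\|<\infty$, Theorem \ref{bpe-left-thm} to conclude $w\in bpe(T)$, and density of $\{p(T)\}$ to identify $E_w^*x$ with the series. The only cosmetic difference is that you re-derive the biorthogonality $\langle T^jx,T'^ky\rangle=\delta_{jk}\langle x,y\rangle$ from $T'^*T=T^*T'=I$ instead of quoting the identity \eqref{Sn-Ew} already established in the proof of Theorem \ref{bpe-left-thm}, and you apply the uniform boundedness principle to the projected family rather than to $S_n(w)|_{\ker T^*}$ itself, which changes nothing.
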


\begin{proof}
For $x \in \ker T^*$, the radius of convergence of the power series $\sum_{n \in \mathbb N} \overline{w}^n T'^n x$ is $\big(\limsup_{n \rar \infty} \|T'^n x\|^\frac{1}{n}\big)^{-1} = \frac{1}{r_{T'}(x)}$. If we set $r := \inf_{x \in \ker T^*} \frac{1}{r_{T'}(x)}$, then it follows that if $w \in \mathbb D(0, r)$, then $\sup_{n \in \mathbb N} \|S_n(w) x\| < \infty$ for all $x \in \ker T^*$. Hence, by the uniform boundedness principle, we get that $\sup_{n \in \mathbb N} \|S_n(w)|_{\ker T^*}\| < \infty$. Thus, $\mathbb D(0, r) \subseteq bpe(T)$. For the moreover part, let $p$ be any $\ker T^*$-valued polynomial and $w \in \mathbb D(0, r)$. Then for each $n \geqslant \mbox{deg}\, p$, by \eqref{Sn-Ew}, we get $\inp{S_n(w)x}{p(T)} = \inp{E_w^* x}{p(T)}$. Taking $n \rar \infty$, we get the desired conclusion.
\end{proof}

The subspace $\mathcal M :=\bigvee_{w \in bpe(T)} E^*_w(\ker T^*)$ plays a central role in studying the function theoretic behaviour of $T$. In fact, if $\mathcal M = \mathcal H$, then it turns out that $T$ can be modelled as $\mathscr M_z$ on a reproducing kernel Hilbert space of $\ker T^*$-valued functions defined on $bpe(T)$ (see Theorem \ref{Mz}). The following proposition studies the general properties of $\mathcal M$ and also gives a dichotomy that $\mathcal M$ is either whole $\mathcal H$ or $\mathcal M$ skips an infinite dimensional subspace if $T$ is injective and analytic.   

\begin{proposition}\label{M-perp}
Let $T$ be a bounded linear operator on a complex separable Hilbert space $\mathcal H$ with the wandering subspace property. Let $\mathcal M := \bigvee_{w \in bpe(T)} E^*_w(\ker T^*)$. Then the following statements hold:
\begin{enumerate}
\item[$(i)$] $\mathcal M^\bot$ is $T$-invariant.
\item[$(ii)$] If $T$ is injective and analytic, then either $\mathcal M = \mathcal H$ or $\mathcal M^\bot$ is infinite dimensional.
\item[$(iii)$] If $T$ is circular, then $T|_{\mathcal M^\bot}$ is circular.
\item[$(iv)$] If $T$ is analytic, then $T|_{\mathcal M^\bot}$ is analytic.
\item[$(v)$] $P_{\mathcal M^\bot}(\ker T^*) \subseteq \ker(T|_{\mathcal M^\bot})^*$.
\end{enumerate}
\end{proposition}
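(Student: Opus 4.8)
The plan is to verify the five statements in order, relying on Proposition~\ref{bpe-prop} and the basic structure of $\mathcal M$. First I would fix the key observation used repeatedly: by Proposition~\ref{bpe-prop}(ii), for $w \in bpe(T)$ we have $E_w^*(\ker T^*) = \ker(T^* - \overline w)$, so $\mathcal M = \bigvee_{w \in bpe(T)} \ker(T^* - \overline w)$. Hence $\mathcal M^\bot = \bigcap_{w \in bpe(T)} \ker(T^* - \overline w)^\bot = \bigcap_{w \in bpe(T)} \mathrm{cl}\,\ran(T - w)$, using Proposition~\ref{bpe-prop}(iii). For (i), let $x \in \mathcal M^\bot$ and $w \in bpe(T)$; since $\mathcal M^\bot$ is a reducing-type intersection of closed ranges, I would instead argue via adjoints: $x \perp \ker(T^*-\overline w)$ for every $w \in bpe(T)$, and since $0 \in bpe(T)$, $x \perp \ker T^*$, so $x \in \mathrm{cl}\,\ran T$; more directly, for $h \in \mathcal M$, note $T^*h \in \mathcal M$ as well (because each $\ker(T^*-\overline w)$ is $T^*$-invariant), so $\mathcal M$ is $T^*$-invariant, whence $\mathcal M^\bot$ is $T$-invariant. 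That is the cleanest route and I would use it.

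For (v): since $\mathcal M^\bot$ is $T$-invariant, write $R := T|_{\mathcal M^\bot} \in \mathcal B(\mathcal M^\bot)$, and note $R^* = P_{\mathcal M^\bot} T^*|_{\mathcal M^\bot}$. Take $u = P_{\mathcal M^\bot} x$ with $x \in \ker T^*$; then for any $v \in \mathcal M^\bot$, $\langle R^* u, v\rangle = \langle u, Tv\rangle = \langle x, Tv\rangle = \langle T^*x, v\rangle = 0$, so $R^* u = 0$, i.e.\ $P_{\mathcal M^\bot}(\ker T^*) \subseteq \ker R^*$. For (iv), suppose $T$ is analytic. We have $R^n(\mathcal M^\bot) = T^n(\mathcal M^\bot) \subseteq T^n(\mathcal H)$ — wait, more carefully, $R^n y = T^n y$ for $y \in \mathcal M^\bot$, and these lie in $T^n(\mathcal H)$; but we also need them in $\mathcal M^\bot$, which they are. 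So $\bigcap_n R^n(\mathcal M^\bot) \subseteq \bigcap_n T^n(\mathcal H) = \{0\}$, giving analyticity of $R$. For (iii), if $T$ is circular with unitaries $U_\theta$ satisfying $U_\theta T = e^{i\theta} T U_\theta$, I would first check $U_\theta$ preserves $\mathcal M^\bot$: since $\ker T^* = \ker(e^{i\theta}T)^*$ and, as in Proposition~\ref{circular symmetry}, $bpe(T) = bpe(e^{i\theta}T)$ with $bpe(T)$ invariant under rotation, $U_\theta$ maps $\ker(T^*-\overline w)$ onto $\ker(T^* - \overline{e^{-i\theta}w})$ (a short computation using $U_\theta T^* = e^{-i\theta}T^* U_\theta$), hence permutes the summands of $\mathcal M$ and so preserves $\mathcal M$ and $\mathcal M^\bot$; then $U_\theta|_{\mathcal M^\bot}$ is the required unitary intertwiner for $R$.

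The main obstacle is (ii): the dichotomy that $\mathcal M = \mathcal H$ or $\dim \mathcal M^\bot = \infty$ when $T$ is injective and analytic. Here I would set $R := T|_{\mathcal M^\bot}$, which is injective (restriction of an injective operator) and analytic by (iv). The idea is that $\ker R^* \neq \{0\}$ would force $\mathcal M^\bot$ to be infinite dimensional via an analyticity-plus-injectivity argument: if $0 \neq u \in \ker R^*$ then the vectors $u, Ru, R^2u, \dots$ are linearly independent — indeed a nontrivial relation $\sum_{k=0}^{m} c_k R^k u = 0$ with $c_0 \neq 0$ (we may assume the lowest nonzero coefficient is $c_0$ after applying injectivity of $R$ to cancel leading powers of $R$) would, upon pairing with $u$ and using $R^*u = 0$ hence $\langle R^k u, u\rangle = 0$ for $k \geq 1$, give $c_0\|u\|^2 = 0$, a contradiction — so $\dim \mathcal M^\bot = \infty$. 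It remains to show that $\mathcal M^\bot \neq \{0\}$ implies $\ker R^* \neq \{0\}$: this is exactly where analyticity of $R$ enters, since a nonzero analytic operator on a nonzero space cannot be bounded below on all of the space in a way that kills the cokernel — concretely, if $\ker R^* = \{0\}$ then $R$ has dense range, and combined with... here I would invoke that an injective analytic operator with dense range on a nonzero Hilbert space is impossible unless the space is $\{0\}$ (if $\ran R$ is dense then $\overline{R^n(\mathcal M^\bot)}$ decreasing to $\{0\}$ forces $\mathcal M^\bot = \{0\}$, because dense range of $R$ propagates: $\overline{R(\mathcal M^\bot)} = \mathcal M^\bot$ gives by induction $\overline{R^n(\mathcal M^\bot)} = \mathcal M^\bot$ for all $n$, contradicting $\bigcap_n \overline{R^n(\mathcal M^\bot)} = \{0\}$ unless $\mathcal M^\bot = \{0\}$). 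Assembling these: either $\mathcal M^\bot = \{0\}$, i.e.\ $\mathcal M = \mathcal H$, or $\mathcal M^\bot \neq \{0\}$, in which case $R$ does not have dense range, so $\ker R^* \neq \{0\}$, and then the independence argument above yields $\dim \mathcal M^\bot = \infty$. I expect the delicate point to be making the "dense range propagates under analytic iterates" step fully rigorous (one must be careful that $\bigcap_n \overline{R^n(\mathcal M^\bot)}$ equals $\bigcap_n R^n(\mathcal M^\bot)$ or at least contains it, which suffices since the latter is $\{0\}$), but this is standard.
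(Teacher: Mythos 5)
Your arguments for (i), (iii), (iv) and (v) are correct and essentially equivalent to the paper's: (i) and (iii) run through the $T^*$-invariance of the eigenspaces $\ker(T^*-\overline{w})$ and the circular symmetry of $bpe(T)$ instead of through the identities $E_wT=wE_w$ and $\ker E_w=\mathrm{cl}\,\ran(T-w)$, but the content is the same, and (iv), (v) match the paper's proofs.

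Part (ii), however, has a genuine gap. Your dichotomy hinges on the claim that if $\mathcal M^\bot\neq\{0\}$ then $R:=T|_{\mathcal M^\bot}$ cannot have dense range, i.e.\ that a nonzero injective analytic operator cannot have dense range. Your justification of this is exactly the step you flagged: from dense range you get $\mathrm{cl}\,R^n(\mathcal M^\bot)=\mathcal M^\bot$ for every $n$, but analyticity only controls $\bigcap_n R^n(\mathcal M^\bot)$ \emph{without} closures, and the inclusion $\bigcap_n \mathrm{cl}\,R^n(\mathcal M^\bot)\supseteq\bigcap_n R^n(\mathcal M^\bot)=\{0\}$ that you propose to use is in the wrong direction — it is vacuous and yields no contradiction. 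An operator can perfectly well have dense, non-closed ranges of all its powers while the (unclosed) intersection is small, so the "dense range propagates" argument does not close, and the underlying claim is not the standard fact you take it to be; at the very least it is unproved here. The repair is to notice that you only ever need this in the finite-dimensional alternative, where no closure issue arises: if $\mathcal M^\bot$ is finite dimensional, then $R$ is an injective operator on a finite-dimensional space by (i) and the injectivity of $T$, hence surjective, so $\mathcal M^\bot=R^n(\mathcal M^\bot)\subseteq T^n(\mathcal H)$ for all $n$, and analyticity of $T$ forces $\mathcal M^\bot=\{0\}$. This is precisely the paper's proof of (ii); with it, your cokernel and linear-independence machinery (which is itself fine) becomes unnecessary.
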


\begin{proof}
Observe that $\mathcal M^\bot = \bigcap_{w \in bpe(T)} \ker E_w$. Let $x \in \mathcal M^\bot$. Then by Proposition \ref{bpe-prop}(v), we have $E_w T x = w E_w x= 0$ for all $w \in bpe(T)$. Thus $T x \in \mathcal M^\bot$. This proves (i).

Suppose that $T$ is injective and analytic. Let $\mathcal M^\bot$ be finite dimensional. Since $T$ is injective, it follows from (i) that $T|_{\mathcal M^\bot}$ is invertible. But then analyticity of $T$ gives that
$\mathcal M^\bot \subseteq \bigcap_{n \in \mathbb N} T^n(\mathcal H) = \{0\}.
$
This completes the verification of (ii).

To see (iii), assume that $T$ is circular. In view of Proposition \ref{bpe-prop}(iii), we have 
\beqn
\mathcal M^\bot = \bigcap_{w \in bpe(T)} \ker E_w = \bigcap_{w \in bpe(T)} \textup{cl\,ran}(T - w).
\eeqn
Let $\theta \in \mathbb R$. Since $T$ is circular, there exists a unitary $U_\theta$ on $\mathcal H$ such that $U_\theta T = e^{i\theta} T U_\theta$. Let $x \in \ran\,(T-w)$. Then $x = (T-w) y$ for some $y \in \mathcal H$. This gives that
\beqn
U_\theta x = U_\theta (T-w) y = (e^{i\theta} T - w) U_\theta y.
\eeqn
Thus $U_\theta \ran\,(T-w) \subseteq \ran\,(e^{i\theta} T-w)$. Consequently, $U_\theta \textup{cl\,ran}(T - w) \subseteq \textup{cl\,ran}(e^{i\theta} T - w)$, which implies that
\beq\label{Utheta-1}
U_\theta \bigcap_{w \in bpe(T)} \textup{cl\,ran}(T - w) \subseteq \bigcap_{w \in bpe(T)} \textup{cl\,ran}(e^{i\theta} T - w).
\eeq
Since $bpe(T)$ has circular symmetry (by Proposition \ref{circular symmetry}), we have $e^{i\theta} bpe(T) = bpe(T)$. This gives that
\beq\label{ran-M}
\bigcap_{w \in bpe(T)} \textup{cl\,ran}(e^{i\theta} T - w) &=& \bigcap_{u \in bpe(T)} \textup{cl\,ran}(e^{i\theta} T - e^{i\theta} u) \notag \\
&=& \bigcap_{u \in bpe(T)} \textup{cl\,ran}(T - u) = \mathcal M^\bot.
\eeq
The identities \eqref{Utheta-1} and \eqref{ran-M} together show that $U_\theta \mathcal M^\bot \subseteq \mathcal M^\bot$. Similarly, using $U^*_\theta T = e^{-i\theta} T U^*_\theta$, we get that $U^*_\theta \mathcal M^\bot \subseteq \mathcal M^\bot$. Thus $U_\theta|_{\mathcal M^\bot}$ is a unitary on $\mathcal M^\bot$, and for all $x \in \mathcal M^\bot$, we have $U_\theta T x = e^{i\theta} T U_\theta x$. Hence, $T|_{\mathcal M^\bot}$ is circular. This establishes (iii). The conclusion in (iv) follows easily from the following observation:
\beqn
\bigcap_{n \in \mathbb N} T^n(\mathcal M^\bot) \subseteq \bigcap_{n \in \mathbb N} T^n(\mathcal H) = \{0\}.
\eeqn

For the proof of (v), let $x \in \ker T^*$. Then we get
\beqn
0 = T^* x = T^* P_{\mathcal M} x + T^* P_{\mathcal M^\bot} x,
\eeqn
which, in turn, implies that $0 = P_{\mathcal M^\bot} T^* x = P_{\mathcal M^\bot} T^* P_{\mathcal M^\bot} x$. Thus $P_{\mathcal M^\bot} x \in \ker(T|_{\mathcal M^\bot})^*$. This completes the proof.
\end{proof}

The following corollary is an immediate fallout of the preceding proposition and the fact that the spectrum of an analytic operator is connected \cite[Lemma 5.2]{CT}.

\begin{corollary}
Let $T$ be an analytic and circular operator on a complex separable Hilbert space $\mathcal H$ with the wandering subspace property. Let $\mathcal M := \bigvee_{w \in bpe(T)} E^*_w(\ker T^*)$. Then the spectrum of $T|_{\mathcal M^\bot}$ is a closed disc centred at the origin with radius being equal to the spectral radius of $T|_{\mathcal M^\bot}$. 
\end{corollary}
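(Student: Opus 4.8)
The plan is to combine Proposition~\ref{M-perp} with the structural consequences of circularity and analyticity and to invoke the cited fact that the spectrum of an analytic operator is connected. Write $R := T|_{\mathcal M^\bot}$, which is meaningful because $\mathcal M^\bot$ is $T$-invariant by Proposition~\ref{M-perp}(i). By Proposition~\ref{M-perp}(iv), $R$ is analytic, and by Proposition~\ref{M-perp}(iii), $R$ is circular; these are the only two inputs I need about $R$ itself.

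First I would show that circularity of $R$ forces $\sigma(R)$ to have circular symmetry, i.e. $e^{i\theta}\sigma(R) = \sigma(R)$ for every real $\theta$. This follows from the defining intertwining relation $U_\theta R = e^{i\theta} R U_\theta$ with $U_\theta$ unitary on $\mathcal M^\bot$: it gives $U_\theta (R - \lambda) U_\theta^* = e^{i\theta} R - \lambda = e^{i\theta}(R - e^{-i\theta}\lambda)$, so $R - \lambda$ is invertible exactly when $R - e^{-i\theta}\lambda$ is, whence $\lambda \in \sigma(R) \iff e^{-i\theta}\lambda \in \sigma(R)$. Thus $\sigma(R)$ is a union of circles centred at the origin (together with possibly the point $0$). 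Second, by Proposition~\ref{M-perp}(iv), $R$ is analytic, so by \cite[Lemma 5.2]{CT} the set $\sigma(R)$ is connected. A connected, circularly symmetric, compact subset of $\mathbb C$ that contains its spectral radius $r(R)$ on its boundary and is nonempty must be precisely the closed disc $\overline{\mathbb D}(0, r(R))$: connectedness rules out an annulus or a union of disjoint circles, and circular symmetry together with the fact that $r(R)=\max\{|\lambda|:\lambda\in\sigma(R)\}$ forces every circle of radius $\le r(R)$ to lie in $\sigma(R)$.

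I would carry out the steps in that order: (1) note $\mathcal M^\bot$ is $T$-invariant and set $R := T|_{\mathcal M^\bot}$; (2) deduce $R$ is circular and analytic from Proposition~\ref{M-perp}(iii),(iv); (3) from circularity derive $e^{i\theta}\sigma(R)=\sigma(R)$ for all $\theta \in \mathbb R$; (4) from analyticity and \cite[Lemma 5.2]{CT} derive that $\sigma(R)$ is connected; (5) conclude that a nonempty compact connected circularly symmetric set must be $\overline{\mathbb D}(0,\rho)$ where $\rho = \sup\{|\lambda| : \lambda\in\sigma(R)\} = r(R)$. A minor edge case to dispatch is $\mathcal M^\bot = \{0\}$, in which case the statement is vacuous (or one interprets the spectrum as empty); this does not occur when one reads the corollary as a statement about the nontrivial situation, and in any event Proposition~\ref{M-perp}(ii) shows $\mathcal M^\bot$ is either $\{0\}$ or infinite-dimensional when $T$ is injective and analytic.

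The main obstacle is purely the last elementary topological step, Step~(5): one must argue carefully that circular symmetry plus connectedness genuinely pins the spectrum down to a full closed disc rather than, say, a single circle or an annulus. The point is that if $\lambda_0 \in \sigma(R)$ with $|\lambda_0| = \rho = r(R)$, then the whole circle $\{|\lambda| = \rho\}$ lies in $\sigma(R)$ by circular symmetry; connectedness of $\sigma(R)$ then prevents $\sigma(R)$ from being disconnected from the origin, and combined once more with circular symmetry (every point of $\sigma(R)$ drags its entire centred circle along) one gets that $\sigma(R)$ contains every circle of radius in $[0,\rho]$, hence equals $\overline{\mathbb D}(0,\rho)$. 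Everything else is a direct citation of Proposition~\ref{M-perp} and \cite[Lemma 5.2]{CT}, so there is no real computational content.
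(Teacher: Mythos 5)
Your steps (1)--(4) are exactly the paper's intended route (the paper derives the corollary from Proposition \ref{M-perp} together with the connectedness of the spectrum of an analytic operator), but your step (5) contains a genuine gap: the purely topological claim you rely on is false as stated. A nonempty compact, connected, circularly symmetric subset of $\mathbb C$ containing a point of maximal modulus $\rho$ need not be the closed disc $\overline{\mathbb D}(0,\rho)$ --- an annulus $\{a \leqslant |\lambda| \leqslant \rho\}$ with $a>0$, or even the single circle $\{|\lambda|=\rho\}$, is connected and circularly symmetric. So ``connectedness rules out an annulus'' is simply wrong, and the phrase ``connectedness prevents $\sigma(R)$ from being disconnected from the origin'' is vacuous unless you first know that $0 \in \sigma(R)$, which you never establish. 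Circularity alone cannot give this (an invertible bilateral shift is circular with spectrum an annulus or a circle), so some use of analyticity beyond ``$\sigma(R)$ is connected'' is indispensable here.

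The missing ingredient is that analyticity of $R := T|_{\mathcal M^\bot}$ forces $0 \in \sigma(R)$ whenever $\mathcal M^\bot \neq \{0\}$: if $R$ were invertible, then $\bigcap_{n \in \mathbb N} R^n(\mathcal M^\bot) = \mathcal M^\bot \neq \{0\}$, contradicting Proposition \ref{M-perp}(iv); alternatively, Theorem \ref{local} gives $0 \in \sigma_R(x) \subseteq \sigma(R)$ for any nonzero $x \in \mathcal M^\bot$. With this in hand your argument closes correctly: the modulus map sends the compact connected set $\sigma(R)$ onto a compact interval in $[0,\infty)$ containing both $0$ and $r(R)$, hence onto $[0, r(R)]$, and circular symmetry (your step (3), which is fine) then yields $\sigma(R) = \overline{\mathbb D}\big(0, r(R)\big)$. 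Your handling of the degenerate case $\mathcal M^\bot = \{0\}$ is acceptable; the rest of the proposal needs no change.
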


As mentioned earlier, the investigation of the equality $\bigvee_{w \in \sigma(T)} \ker(T^* - \overline{w})=\mathcal H$ is a pivotal part in the study of the function theoretic behaviour of $T$. Clancey and Rogers proved that the aforementioned equality holds for a completely non-normal hyponormal operator whose approximate point spectrum has area measure zero \cite[Theorem 2]{CR}. We give below another sufficient condition which ensures that the said equality holds for a left-invertible operator with the wandering subspace property. This condition was also derived in \cite[Proposition 2.7]{S} but here we provide a different proof. Before proceeding towards the result, we need the following prerequisites from the local spectral theory.

Let $X$ be a Banach space and $T$ be a bounded linear operator on $X$. For $x\in X$, the local resolvent of $T$ at $x$, denoted by $\rho_T(x)$, is the union of all open subsets $G$ of $\mathbb{C}$ for which there is an analytic function $f:G\longrightarrow X$ satisfying $(\lambda-T)f(\lambda)=x$ for all $\lambda\in G$. The complement of $\rho_T(x)$ is called the local spectrum of $T$ at $x$ and is denoted by $\sigma_T(x)$. The following theorem will be needed in the subsequent proposition.

\begin{theorem}\cite[Theorem 1.6.3]{LN}\label{local}
Let $T$ be a bounded linear operator on a complex Banach space $X$. If $T$ is analytic, then $0 \in \sigma_T(x)$ for all non-zero $x \in X$.
\end{theorem}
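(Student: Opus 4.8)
The plan is to prove the statement by contradiction, using the power-series expansion of a local resolvent function at the origin. Suppose, for some non-zero $x \in X$, that $0 \in \rho_T(x)$. Then, by the definition of the local resolvent set, there is an open disc $\mathbb D(0,\rho)$ and an analytic function $f : \mathbb D(0,\rho) \to X$ with $(\lambda - T)f(\lambda) = x$ for all $\lambda \in \mathbb D(0,\rho)$. First I would invoke the standard theory of vector-valued analytic functions to write $f(\lambda) = \sum_{n=0}^{\infty} a_n \lambda^n$ with $a_n \in X$, the series converging in norm on $\mathbb D(0,\rho)$ (after shrinking $\rho$ if necessary).

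Next I would substitute this expansion into the identity $(\lambda - T)f(\lambda) = x$ and compare Taylor coefficients of the two sides. A direct computation gives $(\lambda - T)f(\lambda) = -T a_0 + \sum_{n=1}^{\infty}(a_{n-1} - T a_n)\lambda^n$, and since this equals the constant function $x$, uniqueness of power-series coefficients forces $-T a_0 = x$ together with $a_{n-1} = T a_n$ for every $n \geq 1$. Iterating the recursion yields $a_0 = T a_1 = T^2 a_2 = \cdots = T^n a_n$ for all $n \in \mathbb N$, so $a_0$ lies in $\bigcap_{n \in \mathbb N} T^n(X)$.

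Finally, analyticity of $T$ means precisely that $\bigcap_{n \in \mathbb N} T^n(X) = \{0\}$, hence $a_0 = 0$ and therefore $x = -T a_0 = 0$, contradicting $x \neq 0$. Thus $0 \notin \rho_T(x)$, i.e. $0 \in \sigma_T(x)$, for every non-zero $x \in X$. I do not anticipate any genuine obstacle here: the only step that requires a word of justification is the passage to a norm-convergent Taylor series for the $X$-valued analytic function $f$ together with the legitimacy of comparing coefficients, which is classical Banach-space-valued complex analysis via the Cauchy integral formula; the remainder is a purely formal manipulation of power series.
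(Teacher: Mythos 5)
Your argument is correct and complete. Note that the paper does not prove this statement at all: it is imported verbatim as \cite[Theorem 1.6.3]{LN}, so there is no internal proof to compare against. Your power-series argument is the standard one and is sound in every step: an $X$-valued analytic function on a neighbourhood of $0$ has a norm-convergent Taylor expansion $f(\lambda)=\sum_{n\geqslant 0}a_n\lambda^n$ (Cauchy integral formula for vector-valued holomorphic functions), boundedness of $T$ lets you apply it termwise, and uniqueness of Taylor coefficients of the identically-$x$ function $(\lambda-T)f(\lambda)$ gives $-Ta_0=x$ and $a_{n-1}=Ta_n$ for $n\geqslant 1$. Iterating yields $a_0=T^na_n\in\bigcap_{n\in\mathbb N}T^n(X)=\{0\}$, hence $x=-Ta_0=0$, a contradiction. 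One small remark: you can shortcut the last step, since $x=-Ta_0=-T^{n+1}a_n$ shows directly that $x\in\bigcap_{n\in\mathbb N}T^n(X)$, so analyticity of $T$ forces $x=0$ without first discussing $a_0$; either way the conclusion $0\in\sigma_T(x)$ for all non-zero $x$ follows, and the only background you invoke (local Taylor expansions and coefficient comparison for Banach-space-valued holomorphic functions) is classical.
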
 

\begin{proposition}\label{prop2.10}
Let $T$ be a left-invertible operator on a complex separable Hilbert space $\mathcal H$ and let $T'$ denote the Cauchy dual of $T$. Suppose that $T$ has the wandering subspace property. If $T$ is analytic, then $\bigvee_{w \in \mathbb D(0, r(T')^{-1})} E^*_w(\ker T^*) = \mathcal H$.
\end{proposition}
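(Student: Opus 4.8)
The plan is to show that the orthogonal complement of $\mathcal{N} := \bigvee_{w \in \mathbb{D}(0, r(T')^{-1})} E_w^*(\ker T^*)$ is trivial, with the decisive input being the local spectral theory fact recorded in Theorem \ref{local}. So let $y \perp \mathcal{N}$. By Corollary \ref{bpe-incl-cor} every $w \in \mathbb{D}(0, r(T')^{-1})$ lies in $bpe(T)$, so $E_w$ is defined there and $\langle E_w y, x \rangle = \langle y, E_w^* x \rangle = 0$ for all $x \in \ker T^*$; since $E_w y \in \ker T^*$, this forces $E_w y = 0$. Hence, by Proposition \ref{bpe-prop}(iii), $y \in \textup{cl\,ran}(T - w)$ for every $w \in \mathbb{D}(0, r(T')^{-1})$.

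Next I would upgrade this to an analytic family of preimages of $y$. As $T$ is left-invertible it is bounded below, say $\|Th\| \geq m\|h\|$ for a constant $m > 0$; then for $|w| < r_0 := \min\{m,\, r(T')^{-1}\}$ the operator $T - w$ is also bounded below, hence injective with closed range, so $\textup{cl\,ran}(T-w) = \textup{ran}(T-w)$ and there is a \emph{unique} $h_w \in \mathcal{H}$ with $(T - w) h_w = y$. The crux is that $w \mapsto h_w$ is holomorphic on $\mathbb{D}(0, r_0)$, and I expect this to be the main (if routine) technical point. To see it, fix $w_0 \in \mathbb{D}(0, r_0)$; from $(T-w)h_w = y = (T - w_0)h_{w_0}$ one gets the identity $(T - w_0)(h_w - h_{w_0}) = (w - w_0) h_w$, and the lower bound on $T - w_0$ then yields first that $w \mapsto h_w$ is locally bounded and continuous at $w_0$, and second, after applying the bounded left inverse $L_{w_0} := \big((T-w_0)^*(T-w_0)\big)^{-1}(T-w_0)^*$, that $\frac{h_w - h_{w_0}}{w - w_0} = L_{w_0} h_w \to L_{w_0} h_{w_0}$ as $w \to w_0$; thus $h$ is complex differentiable at $w_0$.

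Finally, set $f(w) := -h_w$ for $w \in \mathbb{D}(0, r_0)$. Then $f$ is analytic and $(w - T) f(w) = y$ on $\mathbb{D}(0, r_0)$, so $\mathbb{D}(0, r_0) \subseteq \rho_T(y)$; in particular $0 \notin \sigma_T(y)$. Since $T$ is analytic, Theorem \ref{local} forces $y = 0$. Therefore $\mathcal{N}^\bot = \{0\}$, that is, $\bigvee_{w \in \mathbb{D}(0, r(T')^{-1})} E_w^*(\ker T^*) = \mathcal{H}$. (Alternatively, once $w \mapsto h_w$ is known to be holomorphic one can avoid the local spectral theory and differentiate $(T - w) h_w = y$ repeatedly, getting $(T - w) h_w^{(k)} = k\, h_w^{(k-1)}$ and hence $y = \tfrac{1}{n!}(T - w)^{n+1} h_w^{(n)}$ for every $n$; evaluating at $w = 0$ gives $y \in \bigcap_{n} T^n(\mathcal{H}) = \{0\}$ by analyticity.)
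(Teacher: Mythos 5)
Your proof is correct, and its skeleton coincides with the paper's: take $y$ orthogonal to the span, note $E_w y = 0$ so that $y \in \textup{cl\,ran}(T-w)$ by Proposition \ref{bpe-prop}(iii), produce an analytic family of preimages of $y$ on a disc about the origin, and let Theorem \ref{local} force $y = 0$. The only genuine divergence is in how analyticity of $w \mapsto h_w$ is obtained. The paper exploits the Cauchy dual identity $T'^*T = I$: applying $T'^*$ to $x = (T-w)x(w)$ gives $T'^*x = (I - wT'^*)x(w)$, hence $x(w) = (I - wT'^*)^{-1}T'^*x = \sum_{n} w^n T'^{*(n+1)}x$ on $\mathbb D(0,\|T'\|^{-1})$ -- an explicit Neumann series that delivers uniqueness and holomorphy of the preimage in one stroke (essentially the Shimorin-type resolvent formula). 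You instead prove holomorphy by hand: uniqueness and local boundedness from the lower bound of $T - w$, then the identity $(T-w_0)(h_w - h_{w_0}) = (w-w_0)h_w$ combined with the frozen left inverse $L_{w_0}$ to compute the difference quotient. This is more elementary (it uses left-invertibility but never the Cauchy dual formula) at the cost of a slightly longer verification. Your parenthetical alternative ending is also a genuine variation worth noting: iterating $(T-w)h_w^{(k)} = k\,h_w^{(k-1)}$ places $y$ in $\bigcap_{n} T^{n}(\mathcal H)$ directly, so it bypasses the local spectral theory input (Theorem \ref{local}) on which the paper's argument relies, yielding a self-contained proof from the definition of analyticity of $T$.
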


\begin{proof}
Suppose that $T$ is analytic. Note that $(T^*T)^{-1} = T'^*T' \leqslant \|T'\|^2 I$. This gives that $T-w$ is left-invertible for all $w \in \mathbb D(0,\|T'\|^{-1})$. In view of Corollary \ref{bpe-incl-cor}, set 
\beqn
\mathcal M := \bigvee_{w \in \mathbb D(0, \|T'\|^{-1})} E^*_w(\ker T^*).
\eeqn
Let $x \in \mathcal M^\bot$. Then $x \in \ran\,(T-w)$ for all $w \in \mathbb D(0, \|T'\|^{-1})$, and hence, $x = (T-w) x(w)$ for all $w \in \mathbb D(0, \|T'\|^{-1})$. Consequently, we get
\beqn
T'^*x = T'^* (T-w) x(w) = (I-w T'^*) x(w) \mbox{ for all } w \in \mathbb D(0, \|T'\|^{-1}).
\eeqn
Since $(I-w T'^*)$ is invertible for all $w \in \mathbb D(0, \|T'\|^{-1})$, we have
\beqn
x(w) = (I-w T'^*)^{-1} T'^*x = \sum_{n \in \mathbb N} w^n T'^{*n+1} x \mbox{ for all } w \in \mathbb D(0, \|T'\|^{-1}).
\eeqn
This shows that the map $w \mapsto x(w)$ is analytic on $\mathbb D(0, \|T'\|^{-1})$ which satisfies that $(T-w) x(w) = x$ for all $w \in \mathbb D(0, \|T'\|^{-1})$. Thus $\mathbb D(0, \|T'\|^{-1}) \subseteq \rho_T(x)$. On the other hand, by Theorem \ref{local}, we have that $0 \in \sigma_T(x)$ for all non-zero $x$. Hence, we must have $x=0$. Thus, we get
\beqn
\mathcal H = \mathcal M \subseteq \bigvee_{w \in \mathbb D(0, r(T')^{-1})} E^*_w(\ker T^*).
\eeqn
This completes the proof.
\end{proof}

\section{Analytic bounded point evaluation}

Let $T$ be a bounded linear operator on a complex separable Hilbert space $\mathcal H$ with the wandering subspace property. If $bpe(T)$ has non-empty interior, then define the map $U : \{p(T) : p \in \mathcal P(bpe(T), \ker T^*)\} \rar \mathcal P(bpe(T), \ker T^*)$ by
\beq\label{U}
U(p(T)) = p, \quad p \in \mathcal P(bpe(T), \ker T^*),
\eeq
where $\mathcal P(bpe(T), \ker T^*)$ is the vector space of all $\ker T^*$-valued polynomials defined on $bpe(T)$. It is easy to see that $U$ is a well-defined bijective linear map. Thus, we can define the inner product on $\mathcal P(bpe(T), \ker T^*)$ by 
\beq\label{inp}
\inp{p}{q} := \inp{p(T)}{q(T)}_{\mathcal H}, \quad p, q \in \mathcal P(bpe(T), \ker T^*).
\eeq
Let $\mathscr H$ be the completion of $\mathcal P(bpe(T), \ker T^*)$ with respect to the inner product given by \eqref{inp}. Since $\{p(T) : p \in \mathcal P(bpe(T), \ker T^*)\}$ is a dense subspace of $\mathcal H$ and $U$ is unitary on this subspace, $U$ extends to a unitary from $\mathcal H$ onto $\mathscr H$. We denote this extension of $U$ by $U$ itself. We say that $w \in \mathbb C$ is a {\it bounded point evaluation on} $\mathscr H$ if there exists a positive constant $c$ such that $\|p(w)\|_{\mathcal H} \leqslant c \|p\|_{\mathscr H}$ for all $p \in \mathcal P(bpe(T), \ker T^*)$. If $w$ is a bounded point evaluation on $\mathscr H$, then the evaluation map $\mathscr E_w : \mathscr H \rar \ker T^*$ is continuous. In the following proposition, besides recording the properties of $U$, we also show that the notion of bounded point evaluation defined for $T$ agrees with the usual notion of bounded point evaluation on $\mathscr H$. 

\begin{remark}
The Corollary \ref{bpe-incl-cor} shows that the interior of $bpe(T)$ is non-empty for every operator $T$ in the class of left-invertible operators with the wandering subspace property. 
\end{remark}

\begin{proposition}\label{U-propo}
Let $T$ be a bounded linear operator on a complex separable Hilbert space $\mathcal H$ with the wandering subspace property. Suppose that $bpe(T)$ has non-empty interior. Let $\mathscr H$ be the Hilbert space as described above and $U : \mathcal H \rar \mathscr H$ be the unitary given by \eqref{U}. Then the following statements hold:
\begin{enumerate}
\item[$(i)$] $U$ maps $\ker T^*$ onto the subspace of constant polynomials in $\mathscr H$ and $U|_{\ker T^*}$ is the identity map.
\item[$(ii)$] The set of all bounded point evaluations on $\mathscr H$ is equal to $bpe(T)$.
\item[$(iii)$] For $w \in bpe(T)$, if $E_w$ and $\mathscr E_w$ are respective evaluation maps on $\mathcal H$ and $\mathscr H$, then $U E_w = \mathscr E_w U$.
\end{enumerate}
\end{proposition}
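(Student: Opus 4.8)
The plan is to prove all three parts by unwinding the definitions of $U$ and of the inner product on $\mathscr H$; none of the three requires more than bookkeeping, apart from a single observation in part (ii).

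First, for (i), I would note that if $x\in\ker T^*$ then $x=p_x(T)$, where $p_x\in\mathcal P(bpe(T),\ker T^*)$ is the constant polynomial $p_x(z)\equiv x$ (indeed $p_x(T)=T^0x=x$ by the definition $p(T):=\sum_{n=0}^k T^nx_n$). Hence, by \eqref{U}, $U(x)=U(p_x(T))=p_x$, so $U$ maps $\ker T^*$ into the space of constant polynomials of $\mathscr H$; conversely every constant polynomial equals $p_x$ for some $x\in\ker T^*$, so the image is exactly the constant polynomials. Identifying a constant polynomial with its value then gives $U|_{\ker T^*}=I_{\ker T^*}$.

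Next, for (ii), the point is that \eqref{inp} yields $\|p\|_{\mathscr H}=\|p(T)\|_{\mathcal H}$ for every $p\in\mathcal P(bpe(T),\ker T^*)$. Thus $w$ is a bounded point evaluation on $\mathscr H$ — i.e. $\|p(w)\|_{\mathcal H}\le c\,\|p\|_{\mathscr H}$ for some $c>0$ and all such $p$ — if and only if $\|p(w)\|\le c\,\|p(T)\|$ for all $p\in\mathcal P(bpe(T),\ker T^*)$. Because $bpe(T)$ has non-empty interior, restriction to $bpe(T)$ is a linear bijection of $\mathcal P(\mathbb C,\ker T^*)$ onto $\mathcal P(bpe(T),\ker T^*)$ under which both $p(T)$ and $p(w)$ are left unchanged; hence the displayed condition coincides with the defining condition for $w\in bpe(T)$. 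This observation is the only place where an actual argument, rather than pure notation, is used, and I regard it as the (mild) crux of the proposition.

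Finally, for (iii), fix $w\in bpe(T)$. Both $UE_w$ and $\mathscr E_wU$ are continuous maps from $\mathcal H$ to $\ker T^*$ — the target being identified, via part (i), with the constant polynomials in $\mathscr H$ — so it suffices to verify the identity on the dense subspace $\{p(T):p\in\mathcal P(bpe(T),\ker T^*)\}$. There, $UE_w(p(T))=U(p(w))=p(w)$ (the constant polynomial, using (i)), while $\mathscr E_wU(p(T))=\mathscr E_w(p)=p(w)$; since these agree and both maps are bounded, $UE_w=\mathscr E_wU$ on all of $\mathcal H$. The only thing one must be careful about throughout is keeping the identification of $\ker T^*$ with the constant polynomials of $\mathscr H$ from part (i) consistent, so there is no genuine obstacle here.
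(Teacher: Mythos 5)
Your proposal is correct and follows essentially the same route as the paper: parts (i) and (ii) are the definitional unwindings the paper declares immediate (your bijection-by-restriction remark, valid since $bpe(T)$ has non-empty interior, is the same implicit point), and your density/continuity argument for (iii) is just the sequence-and-limit argument of the paper phrased as ``two continuous maps agreeing on a dense subspace.'' No gaps; nothing further to add.
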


\begin{proof}
The proofs of (i) and (ii) are immediate from the definition of $U$ and that of bounded point evaluation on $\mathscr H$. To see (iii), let $w \in bpe(T)$ and $x \in \mathcal H$. Then there exists a sequence of polynomials $\{p_n\}_{n \geqslant 1}$ in $\mathcal P(bpe(T), \ker T^*)$ such that $p_n(T) \rar x$ as $n \rar \infty$. This, together with (i), gives that
\beqn
p_n(w) = U p_n(w)  \rar UE_w x \mbox{ as } n \rar \infty.
\eeqn
Further, we also have
\beqn
p_n(w) = \mathscr E_w p_n = \mathscr E_w U p_n(T) \rar \mathscr E_w U x \mbox{ as } n \rar \infty.
\eeqn
Thus we get that $U E_w x = \mathscr E_w Ux$. This completes the proof.
\end{proof}

\begin{theorem}\label{Mz}
Let $T$ be a bounded linear operator on a complex separable Hilbert space $\mathcal H$ with the wandering subspace property. Suppose that $bpe(T)$ has non-empty interior. Let $\mathscr H$ be the Hilbert space as described after \eqref{inp}. Then the following statements are true:
\begin{enumerate}
\item[$(i)$] The operator $\mathscr M_z$ of multiplication by the coordinate function defined on the polynomials in $\mathscr H$ extends continuously on $\mathscr H$. If the continuous extension of $\mathscr M_z$ is denoted by $\mathscr M_z$ itself, then $\mathscr M_z U = UT$, where $U : \mathcal H \rar \mathscr H$ is the unitary operator as defined by \eqref{U}.
\item[$(ii)$] The elements of $\mathscr H$ are well-defined $\ker T^*$-valued functions on $bpe(T)$ if and only if $\bigvee_{w \in bpe(T)} E^*_w(\ker T^*) = \mathcal H$.
\end{enumerate}
\end{theorem}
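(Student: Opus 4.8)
The plan is to treat the two parts separately, in each case transporting the relevant structure from $\mathcal H$ to $\mathscr H$ through the unitary $U$ of \eqref{U}.

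For $(i)$, I would start from the elementary identity on polynomials: if $p(z) = \sum_{n=0}^k x_n z^n$ with $x_n \in \ker T^*$, then $T\,p(T) = \sum_{n=0}^k T^{n+1} x_n = q(T)$, where $q(z) = z\,p(z)$, i.e.\ $T$ carries $p(T)$ to $(\mathscr M_z p)(T)$. Applying $U$ and using $U(p(T)) = p$ gives $U T\, p(T) = \mathscr M_z p = \mathscr M_z U\, p(T)$, so the relation $U T = \mathscr M_z U$ holds already on the dense subspace $\{p(T) : p \in \mathcal P(bpe(T), \ker T^*)\}$. Since $U$ is unitary and $T$ is bounded, this exhibits $\mathscr M_z$, a priori defined only on the polynomials in $\mathscr H$, as the restriction of the bounded operator $U T U^*$; hence $\mathscr M_z$ extends uniquely to a bounded operator on all of $\mathscr H$, namely $U T U^*$, and the intertwining $\mathscr M_z U = U T$ persists on $\mathcal H$ by continuity. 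This part is purely formal.

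For $(ii)$, the first step is to make precise what it means for the elements of $\mathscr H$ to be well-defined $\ker T^*$-valued functions on $bpe(T)$. Since every $w \in bpe(T)$ is a bounded point evaluation on $\mathscr H$ (Proposition \ref{U-propo}$(ii)$), each evaluation map $\mathscr E_w : \mathscr H \rar \ker T^*$ is defined, and $f \mapsto \big(w \mapsto \mathscr E_w f\big)$ is a linear map from $\mathscr H$ into the space of $\ker T^*$-valued functions on $bpe(T)$; the elements of $\mathscr H$ are bona fide functions exactly when this map is injective, that is, when $\bigcap_{w \in bpe(T)} \ker \mathscr E_w = \{0\}$. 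I would then invoke Proposition \ref{U-propo}$(iii)$, $\mathscr E_w U = U E_w$, together with the bijectivity of $U$, to obtain $\ker \mathscr E_w = U(\ker E_w)$ for each $w$, whence $\bigcap_{w \in bpe(T)} \ker \mathscr E_w = U\big(\bigcap_{w \in bpe(T)} \ker E_w\big)$. Finally, since $\ker E_w = \big(\ran E^*_w\big)^\bot = \big(E^*_w(\ker T^*)\big)^\bot$, intersecting over $w$ gives $\bigcap_{w \in bpe(T)} \ker E_w = \big(\bigvee_{w \in bpe(T)} E^*_w(\ker T^*)\big)^\bot = \mathcal M^\bot$, the subspace already appearing in Proposition \ref{M-perp}. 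Therefore $\bigcap_{w \in bpe(T)} \ker \mathscr E_w = \{0\}$ if and only if $\bigvee_{w \in bpe(T)} E^*_w(\ker T^*) = \mathcal H$, which is the asserted equivalence.

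The whole argument is essentially bookkeeping carried out through $U$; the one point that calls for care is the reformulation in $(ii)$ — namely, interpreting the requirement that the elements of $\mathscr H$ be well-defined functions as the injectivity of $f \mapsto (\mathscr E_w f)_{w \in bpe(T)}$, i.e.\ that no nonzero vector of the abstract completion $\mathscr H$ is annihilated by all the point evaluations. Once this is granted, $U$ reduces the statement to the already-analyzed subspace $\mathcal M = \bigvee_{w \in bpe(T)} E^*_w(\ker T^*)$ and its orthogonal complement.
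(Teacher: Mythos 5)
Your proposal is correct and follows essentially the same route as the paper: for $(i)$ the paper simply cites the argument of \cite[Theorem 3.1]{T}, which is exactly the intertwining computation $UT\,p(T)=\mathscr M_z U\,p(T)$ on the dense polynomial subspace that you spell out, and for $(ii)$ the paper likewise pulls a function $f$ with $\mathscr E_w f=0$ for all $w\in bpe(T)$ back through $U$ and identifies $\bigcap_{w\in bpe(T)}\ker E_w$ with $\big(\bigvee_{w\in bpe(T)}E_w^*(\ker T^*)\big)^\bot$. Your explicit reformulation of ``well-defined functions'' as injectivity of $f\mapsto(\mathscr E_w f)_{w\in bpe(T)}$ is the same interpretation the paper uses implicitly, so no gap remains.
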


\begin{proof}
The proof of (i) follows by imitating the arguments of the proof of \cite[Theorem 3.1]{T}. For the proof of (ii), assume that $\bigvee_{w \in bpe(T)} E^*_w(\ker T^*) = \mathcal H$. Let $f \in \mathscr H$ and $f(w) = 0$ for all $w \in bpe(T)$. Then there exists a vector $x \in \mathcal H$ such that $U x = f$. Using Proposition \ref{U-propo}, we get that
\beqn
0 = f(w) = \mathscr E_w U x = U E_w x = E_w x \mbox{ for all } w \in bpe(T).
\eeqn
Thus we get
\beqn
x \in \bigcap_{w \in bpe(T)} \ker E_w = \Big(\bigvee_{w \in bpe(T)} E^*_w(\ker T^*)\Big)^\bot = \mathcal H^\bot = \{0\}.
\eeqn 
This gives that $f = 0$. The proof of the converse is obvious. 
\end{proof}

Let $\mathscr H$ be the Hilbert space as described after \eqref{inp}. Then for $f \in \mathscr H$ and $w \in bpe(T)$, we define 
\beqn
\hat f (w) := \mathscr E_w (f).
\eeqn
Thus $\hat f$ is a well-defined $\ker T^*$-valued function on $bpe(T)$. The largest open subset of $bpe(T)$ on which $\hat f$ is analytic (or holomorphic) for all $f \in \mathscr H$, is said to be the  set of all {\it analytic bounded point evaluations for} $T$, and is denoted by $abpe(T)$. It follows from Proposition \ref{U-propo} and  Theorem \ref{Mz}(ii) that $f \in \mathscr H$ can be identified with $\hat f$ if and only if $\bigvee_{w \in bpe(T)} \mathscr E^*_w(\ker T^*) = \mathscr H$. In this case, $\mathscr H$ turns out to be a reproducing kernel Hilbert space of $\ker T^*$-valued functions defined on $bpe(T)$ with the reproducing kernel $\kappa : bpe(T) \times bpe(T) \rar \mathcal B(\ker T^*)$ given by $\kappa(z,w) = \mathscr E_z \mathscr E_w^*,\  z, w \in bpe(T)$, \cite[Chapter 6]{PR}. 

The following theorem gives several equivalent characterizations of $abpe(T)$. Its proof can be easily obtained by following the arguments of the proof of \cite[Theorem 3.4]{T} along with the help of Theorem \ref{bpe-thm}, Proposition \ref{bpe-prop} and Theorem \ref{Mz}.

\begin{theorem}\label{apbe-thm-old}
Let $T$ be a bounded linear operator on a complex separable Hilbert space $\mathcal H$ with the wandering subspace property. Suppose that $bpe(T)$ has non-empty interior. Let $\mathscr H$ be the Hilbert space as described after \eqref{inp}. Then the following statements hold:
\begin{enumerate}
\item[$(i)$] Suppose that $\mathcal O$ is an open set contained in $bpe(T)$. Then $\mathcal O \subseteq abpe(T)$ if and only if the map $w \mapsto \mathscr E_w^*$ of $\mathcal O$ into $\mathcal B(\ker T^*, \mathscr H)$ is bounded on compact subsets of $\mathcal O$. 
\item[$(ii)$] A point $w \in abpe(T)$ if and only if there exists a neighbourhood $\mathcal O \subseteq bpe(T)$ of $w$ such that the map $\lambda \mapsto \mathscr E_\lambda^* x$ of $\mathcal O$ into $\mathscr H$ is conjugate analytic for all $x \in \ker T^*$.
\item[$(iii)$] Let $\{x_n : 1 \leqslant n \leqslant \dim \ker T^*\}$ be an orthonormal basis of $\ker T^*$. A point $w \in abpe(T)$ if and only if there exist a neighbourhood $\mathcal O $ of $w$ and conjugate analytic maps $\psi_j : \mathcal O \rar \mathscr H$, $j=1, \ldots, \dim\ker T^*$, such that $\psi_j(\lambda) \in \ker(\mathscr M_{z}^*-  \overline{\lambda})$ and $\inp{\psi_j(\lambda)}{x_i} = \delta_{ij}$ for all $\lambda \in \mathcal O$ and $i, j = 1, \ldots, \dim \ker T^*$.
\end{enumerate}
\end{theorem}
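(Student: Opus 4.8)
The plan is to transport the whole question to Shimorin's abstract model space. By Proposition~\ref{U-propo} and Theorem~\ref{Mz} the unitary $U:\mathcal H\to\mathscr H$ conjugates $T$ to $\mathscr M_z$, is the identity on $\ker T^*$ (identified with the constant polynomials), and satisfies $UE_w=\mathscr E_wU$ for every $w\in bpe(T)$; consequently $bpe(\mathscr M_z)=bpe(T)$, $abpe(\mathscr M_z)=abpe(T)$, for a polynomial $p$ one has $\mathscr E_wp=p(w)$ so that $\hat p=p$ is entire, $\mathscr E_w$ restricts to the identity on the constants, and $\mathscr E_w^*:\ker T^*\to\ker(\mathscr M_z^*-\overline{w})$ is the invertible map obtained by carrying Proposition~\ref{bpe-prop}(ii) through $U$. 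Two soft facts will be used repeatedly: a Banach-space-valued map whose scalarizations are all holomorphic is itself holomorphic, and likewise with ``conjugate analytic''; and the uniform boundedness principle, which turns weak local bounds into norm local bounds.

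For (i), first suppose $\mathcal O\subseteq abpe(T)$. For fixed $x\in\ker T^*$ and $f\in\mathscr H$ the scalar function $w\mapsto\langle f,\mathscr E_w^*x\rangle=\overline{\langle\hat f(w),x\rangle}$ is conjugate analytic, hence continuous, on $\mathcal O$; thus $w\mapsto\mathscr E_w^*x$ is weakly continuous, its range over a compact $K\subseteq\mathcal O$ is weakly compact and so norm bounded, and the uniform boundedness principle applied to $\{\mathscr E_w^*:w\in K\}\subseteq\mathcal B(\ker T^*,\mathscr H)$ gives $\sup_{w\in K}\|\mathscr E_w^*\|<\infty$. Conversely, assume $w\mapsto\mathscr E_w^*$ is bounded on compact subsets of $\mathcal O$. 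For $f\in\mathscr H$ pick polynomials $p_n\to f$ in $\mathscr H$; then on every compact $K\subseteq\mathcal O$ one has $\|\widehat{p_n}(w)-\hat f(w)\|=\|\mathscr E_w(p_n-f)\|\le\big(\sup_{w\in K}\|\mathscr E_w^*\|\big)\|p_n-f\|_{\mathscr H}\to0$, so $\hat f$, being a locally uniform limit of the entire functions $\widehat{p_n}=p_n$, is holomorphic on $\mathcal O$; since $f$ was arbitrary, $\mathcal O\subseteq abpe(T)$.

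Part (ii) then follows at once: if $w\in abpe(T)$ take $\mathcal O=abpe(T)$ and argue as in the first half of (i) to see that $\lambda\mapsto\mathscr E_\lambda^*x$ is weakly conjugate analytic, hence conjugate analytic; conversely, if on some neighbourhood $\mathcal O\subseteq bpe(T)$ of $w$ the map $\lambda\mapsto\mathscr E_\lambda^*x$ is conjugate analytic for every $x\in\ker T^*$, then $\langle\hat f(\lambda),x\rangle=\langle f,\mathscr E_\lambda^*x\rangle$ is holomorphic for all $x$, so $\hat f$ is weakly holomorphic, hence holomorphic, on $\mathcal O$, giving $\mathcal O\subseteq abpe(T)$. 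For (iii) the natural candidate, in the spirit of Theorem~\ref{bpe-thm}(iii), is $\psi_j(\lambda):=\mathscr E_\lambda^*x_j$. If $w\in abpe(T)$, take $\mathcal O$ as in (ii): each $\psi_j$ is conjugate analytic on $\mathcal O$, $\psi_j(\lambda)\in\ker(\mathscr M_z^*-\overline{\lambda})$ by the transported Proposition~\ref{bpe-prop}(ii), and since $\mathscr E_\lambda$ is the identity on the constants, $\langle\psi_j(\lambda),x_i\rangle=\langle x_j,\mathscr E_\lambda x_i\rangle=\langle x_j,x_i\rangle=\delta_{ij}$. Conversely, given maps $\psi_j$ with the stated properties, one first checks that $\psi_j(\lambda)=\mathscr E_\lambda^*x_j$ for $\lambda\in\mathcal O$: for $p(z)=\sum_n a_nz^n\in\mathcal P(bpe(T),\ker T^*)$, using $\mathscr M_z^{*n}\psi_j(\lambda)=\overline{\lambda}^{n}\psi_j(\lambda)$ and, for $a\in\ker T^*$, $\langle a,\psi_j(\lambda)\rangle=\langle a,x_j\rangle$ (expand $a$ in the basis $\{x_i\}$ and use biorthogonality), one obtains $\langle p,\psi_j(\lambda)\rangle=\sum_n\lambda^n\langle a_n,x_j\rangle=\langle p(\lambda),x_j\rangle=\langle p,\mathscr E_\lambda^*x_j\rangle$, and density of the polynomials yields the identification. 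Hence $\lambda\mapsto\mathscr E_\lambda^*x_j$ is conjugate analytic for each $j$, and once this is promoted to all of $\ker T^*$ one concludes $w\in abpe(T)$ via (ii).

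The one delicate point --- and the step I expect to be the crux --- is precisely this last promotion in (iii): passing from ``$\lambda\mapsto\mathscr E_\lambda^*x_j$ conjugate analytic for each basis vector $x_j$'' to ``$\lambda\mapsto\mathscr E_\lambda^*x$ conjugate analytic for every $x\in\ker T^*$'', equivalently the local boundedness of $\lambda\mapsto\|\mathscr E_\lambda^*\|$ on $\mathcal O$ needed to pass to the limit in $\mathscr E_\lambda^*x=\sum_j\langle x,x_j\rangle\psi_j(\lambda)$. When $\dim\ker T^*<\infty$ this is automatic, the sum being finite; in general it is handled exactly as in the proof of \cite[Theorem~3.4]{T}, by combining the uniform boundedness principle with the weak-to-strong analyticity principle. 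The remaining bookkeeping --- that $bpe$, $abpe$ and the relevant eigenspaces of the adjoints correspond under $U$, and that $\hat p=p$ for polynomials --- is routine and parallels \cite[Section~3]{T}.
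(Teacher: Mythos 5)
Your parts (i) and (ii), and the forward half of (iii) with the choice $\psi_j(\lambda):=\mathscr E_\lambda^*x_j$, are correct and follow essentially the route the paper intends (the paper itself only points to the argument of \cite[Theorem 3.4]{T} together with Theorem \ref{bpe-thm}, Proposition \ref{bpe-prop} and Theorem \ref{Mz}): weak continuity/analyticity of $\lambda\mapsto\mathscr E_\lambda^*x$ obtained from analyticity of all $\hat f$, the uniform boundedness principle, density of the polynomials, and the weak-to-strong analyticity principle are exactly the right tools there, and you use them correctly.

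The converse of (iii) is where the proposal does not close. A small omission first: the statement does not assume $\mathcal O\subseteq bpe(T)$, so before you may write $\mathscr E_\lambda^*$ at all you must observe that $\psi_j(\lambda)\in\ker(\mathscr M_z^*-\overline{\lambda})$ together with $\inp{\psi_j(\lambda)}{x_i}=\delta_{ij}$ forces $\lambda\in bpe(T)$ by Theorem \ref{bpe-thm} applied to $\mathscr M_z=UTU^*$; easy, but it must be said, since your identification $\psi_j(\lambda)=\mathscr E_\lambda^*x_j$ presupposes it. The genuine gap is the promotion step you yourself flag and then defer. When $\dim\ker T^*=\infty$, conjugate analyticity of $\lambda\mapsto\mathscr E_\lambda^*x_j$ for every basis vector does not, by the tools you name, give conjugate analyticity of $\lambda\mapsto\mathscr E_\lambda^*x$ for all $x\in\ker T^*$, equivalently local boundedness of $\lambda\mapsto\|\mathscr E_\lambda^*\|$ near $w$ (which is what part (i) requires). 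The uniform boundedness principle needs, for each \emph{fixed} $x$, a bound $\sup_{\lambda\in K}\|\mathscr E_\lambda^*x\|<\infty$, and that is precisely what is unknown for $x$ outside the finite linear span of the $x_j$'s; the weak-to-strong (Dunford) principle needs scalar analyticity against the whole dual, or against a norming family together with local boundedness, whereas your hypotheses give scalar analyticity of $\lambda\mapsto\inp{f}{\mathscr E_\lambda^*x}$ only for $x$ in that span --- a total but not norming-with-control family --- and no local bound. Lower semicontinuity of $\lambda\mapsto\|\mathscr E_\lambda^*\|$ plus Baire only yields boundedness on a dense open subset of $\mathcal O$, not on a neighbourhood of the given point $w$. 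Deferring to the proof of \cite[Theorem 3.4]{T} does not supply the missing argument either: there the cyclic subspace is finite dimensional, so the step is exactly the trivial finite sum you already noted. As written, therefore, your proof of the converse of (iii) is complete only when $\dim\ker T^*<\infty$; for infinite-dimensional $\ker T^*$ an additional argument producing the local bound on $\|\mathscr E_\lambda^*\|$ near $w$ (or an added hypothesis) is required and is not given.
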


We give below a complete description of $abpe(T)$ for a left-invertible operator $T$ with the wandering subspace property. Its proof is immediate from Theorem \ref{bpe-left-thm} and Theorem \ref{apbe-thm-old}(i).
 
\begin{theorem}\label{abpe-left-thm}
Let $T$ be a left-invertible operator on a complex separable Hilbert space $\mathcal H$ with the wandering subspace property. Let $T'$ denote the Cauchy dual of $T$. For each $w \in \mathbb C$ and $n \in \mathbb N$, set
\beqn
\mathcal M_n := \bigvee_{k=0}^n T^k(\ker T^*) \quad \mbox{and} \quad S_n(w) := \sum_{k=0}^n \overline{w}^k T'^k.
\eeqn
Then $u \in abpe(T)$ if and only if there exists an open set $\mathcal O \subseteq bpe(T)$ containing $u$ such that the map $w \mapsto \displaystyle\sup_{n \in \mathbb N}\|P_{\mathcal M_n} S_n(w)|_{ker T^*}\|$ of $\mathcal O$ into $[0, \infty)$ is bounded on compact subsets of $\mathcal O$.
\end{theorem}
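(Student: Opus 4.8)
The plan is to deduce the statement by splicing together Theorem~\ref{bpe-left-thm} and Theorem~\ref{apbe-thm-old}(i), the bridge between them being the unitary $U : \mathcal H \rar \mathscr H$ of \eqref{U} and the intertwining relation $U E_w = \mathscr E_w U$ of Proposition~\ref{U-propo}(iii). Concretely, Theorem~\ref{apbe-thm-old}(i) characterizes the inclusion $\mathcal O \subseteq abpe(T)$ for an open $\mathcal O \subseteq bpe(T)$ in terms of the boundedness of $w \mapsto \mathscr E_w^*$ on compact subsets of $\mathcal O$, while the ``moreover'' part of Theorem~\ref{bpe-left-thm} computes $\|E_w^*\|$ as the supremum $\sup_{n}\|P_{\mathcal M_n}S_n(w)|_{\ker T^*}\|$; so what is needed is merely to identify $\|\mathscr E_w^*\|$ with $\|E_w^*\|$.

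First I would record the key numerical identity. For $w \in bpe(T)$, Proposition~\ref{U-propo}(iii) gives $U E_w = \mathscr E_w U$; since $U$ is unitary and, by Proposition~\ref{U-propo}(i), $U|_{\ker T^*}$ is the identity identifying $\ker T^*$ with the constants in $\mathscr H$, taking adjoints yields $\mathscr E_w^* = U E_w^*$ as maps $\ker T^* \rar \mathscr H$, and hence $\|\mathscr E_w^*\| = \|E_w^*\|$. Combining this with the ``moreover'' clause of Theorem~\ref{bpe-left-thm} gives, for every $w \in bpe(T)$,
\[
\|\mathscr E_w^*\| \;=\; \|E_w^*\| \;=\; \sup_{n \in \mathbb N}\|P_{\mathcal M_n} S_n(w)|_{\ker T^*}\|.
\]
Consequently, for any open set $\mathcal O \subseteq bpe(T)$, the map $w \mapsto \mathscr E_w^*$ of $\mathcal O$ into $\mathcal B(\ker T^*, \mathscr H)$ is bounded on compact subsets of $\mathcal O$ if and only if the (finite-valued) map $w \mapsto \sup_{n \in \mathbb N}\|P_{\mathcal M_n} S_n(w)|_{\ker T^*}\|$ of $\mathcal O$ into $[0,\infty)$ is bounded on compact subsets of $\mathcal O$, the two suprema in question being literally equal at each point.

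With this in hand the equivalence is immediate. For the forward direction, if $u \in abpe(T)$, take $\mathcal O := abpe(T)$, which by its very definition is an open subset of $bpe(T)$ containing $u$ with $\mathcal O \subseteq abpe(T)$; Theorem~\ref{apbe-thm-old}(i) then gives that $w \mapsto \mathscr E_w^*$ is bounded on compact subsets of $\mathcal O$, and the displayed identity transfers this to $w \mapsto \sup_n\|P_{\mathcal M_n} S_n(w)|_{\ker T^*}\|$. Conversely, given an open $\mathcal O \subseteq bpe(T)$ containing $u$ on which $w \mapsto \sup_n\|P_{\mathcal M_n} S_n(w)|_{\ker T^*}\|$ is bounded on compact subsets, the identity shows that $w \mapsto \mathscr E_w^*$ is bounded on compact subsets of $\mathcal O$, so Theorem~\ref{apbe-thm-old}(i) yields $\mathcal O \subseteq abpe(T)$, whence $u \in abpe(T)$.

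The only point demanding genuine care — and thus the main (mild) obstacle — is making the identification $\|\mathscr E_w^*\| = \|E_w^*\|$ fully rigorous: one must verify that the intertwining $U E_w = \mathscr E_w U$ of Proposition~\ref{U-propo}(iii), read through the identification $U|_{\ker T^*} = I$ of Proposition~\ref{U-propo}(i), really does dualize to $\mathscr E_w^* = U E_w^*$ on all of $\ker T^*$, and that a surjective isometry preserves operator norm. Everything else is a direct invocation of the two cited theorems.
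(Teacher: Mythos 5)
Your proposal is correct and follows the paper's own route: the paper declares the theorem "immediate from Theorem~\ref{bpe-left-thm} and Theorem~\ref{apbe-thm-old}(i)", and your argument is exactly that splice, with the bridging identity $\|\mathscr E_w^*\| = \|E_w^*\| = \sup_{n}\|P_{\mathcal M_n}S_n(w)|_{\ker T^*}\|$ (via $U E_w = \mathscr E_w U$ and unitarity of $U$) spelled out where the paper leaves it implicit. The only unremarked hypothesis check — that $bpe(T)$ has non-empty interior so that $\mathscr H$ and Theorem~\ref{apbe-thm-old} apply — is supplied by Corollary~\ref{bpe-incl-cor}, as the paper's Remark notes.
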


The proof of the following corollary is immediate from Corollary \ref{bpe-incl-cor} and the preceding theorem.

\begin{corollary}\label{cor3.6}
Let $T$ be a left-invertible operator on a complex separable Hilbert space $\mathcal H$ with the wandering subspace property. Let $T'$ denote the Cauchy dual of $T$. Then the open disc $\mathbb D(0, r)$ is contained in the $abpe(T)$, where $\displaystyle r := \inf_{x \in \ker T^*} \frac{1}{r_{T'}(x)}$. In particular, $\mathbb D\big(0, r(T')^{-1}\big) \subseteq abpe(T)$. Moreover, for each $w \in \mathbb D(0, r)$, we have $\mathscr E_w^* x = \sum_{n \in \mathbb N} \overline{w}^n T'^n x$ for all $x \in \ker T^*$.
\end{corollary}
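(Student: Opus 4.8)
The plan is to obtain Corollary~\ref{cor3.6} by combining Corollary~\ref{bpe-incl-cor} with the description of $abpe(T)$ in Theorem~\ref{abpe-left-thm}; the only genuinely new step is upgrading ``finite at each point'' to ``bounded on compact sets''. Fix $w_0\in\mathbb D(0,r)$. By Corollary~\ref{bpe-incl-cor}, $\mathbb D(0,r)$ is an open subset of $bpe(T)$ containing $w_0$ on which $w\mapsto \sup_{n\in\mathbb N}\|P_{\mathcal M_n}S_n(w)|_{\ker T^*}\|$ is finite-valued. By Theorem~\ref{abpe-left-thm} it then suffices to check that this function is bounded on every compact subset of $\mathbb D(0,r)$.

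Given a compact $K\subseteq\mathbb D(0,r)$, choose $\rho$ with $\sup_{w\in K}|w|\le\rho<r$. For $x\in\ker T^*$ and $|w|\le\rho$ we have $\|S_n(w)x\|\le\sum_{k=0}^{n}\rho^k\|T'^kx\|$, and since $\limsup_{k\to\infty}\|T'^kx\|^{1/k}=r_{T'}(x)\le 1/r<1/\rho$, the root test shows that $M(x):=\sum_{k\ge0}\rho^k\|T'^kx\|$ is finite. As $\|P_{\mathcal M_n}\|\le 1$, this gives $\|P_{\mathcal M_n}S_n(w)x\|\le M(x)$ for all $n\in\mathbb N$ and $w\in K$, so the family $\{P_{\mathcal M_n}S_n(w)|_{\ker T^*}:n\in\mathbb N,\ w\in K\}$ of operators on $\ker T^*$ is pointwise bounded; the uniform boundedness principle then bounds $\sup_{w\in K}\sup_{n\in\mathbb N}\|P_{\mathcal M_n}S_n(w)|_{\ker T^*}\|$. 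This is the required local boundedness, so $\mathbb D(0,r)\subseteq abpe(T)$. The inclusion $\mathbb D(0,r(T')^{-1})\subseteq\mathbb D(0,r)$ follows from $r_{T'}(x)\le r(T')$ for all $x$, which forces $r=\inf_{x\in\ker T^*}1/r_{T'}(x)\ge r(T')^{-1}$.

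For the ``moreover'' claim, note $\mathbb D(0,r)\subseteq abpe(T)\subseteq bpe(T)$, so Corollary~\ref{bpe-incl-cor} already gives $E_w^*x=\sum_{n\in\mathbb N}\overline{w}^{\,n}T'^nx$ for $x\in\ker T^*$ and $w\in\mathbb D(0,r)$; it remains only to transport this across $U$. Using Proposition~\ref{U-propo}(iii) in the form $E_w=\mathscr E_wU$ together with Proposition~\ref{U-propo}(i) (so $U$ is the identity on $\ker T^*$, regarded as the constants in $\mathscr H$), one gets $\mathscr E_w^*=UE_w^*$ on $\ker T^*$, and substituting the series for $E_w^*$ yields the stated expression for $\mathscr E_w^*$.

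No real difficulty is anticipated: the statement is almost pure bookkeeping. The single point that needs an argument rather than a reference is the passage from pointwise finiteness (Corollary~\ref{bpe-incl-cor}) to boundedness on compacta (the hypothesis of Theorem~\ref{abpe-left-thm}), and that is handled by the absolutely convergent majorant $\sum_k\rho^k\|T'^kx\|$ and a further application of the uniform boundedness principle.
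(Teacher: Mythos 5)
Your proof is correct and follows exactly the route the paper intends: the paper states that Corollary \ref{cor3.6} is immediate from Corollary \ref{bpe-incl-cor} and Theorem \ref{abpe-left-thm}, and your argument just fills in the (natural) detail of passing from pointwise finiteness to boundedness on compacta via the majorant $\sum_k \rho^k\|T'^k x\|$ and the uniform boundedness principle, plus the transport of the series formula through $U$.
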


It can be easily seen from Theorem \ref{Mz}, Proposition \ref{prop2.10} and the preceding corollary that if $T$ is a left-invertible analytic operator with the wandering subspace property, then $T$ is unitarily equivalent to $\mathscr M_z$ on the Hilbert space of $\ker T^*$-valued holomorphic functions defined on $\mathbb D\big(0, r(T')^{-1}\big)$ with the reproducing kernel given by
\beqn
\kappa(z,w) = \mathscr E_z \mathscr E_w^* = (I-z T'^*)^{-1}(I-\overline{w} T')^{-1} \mbox{ for all } z, w \in \mathbb D\big(0, r(T')^{-1}\big).
\eeqn
This is precisely Shimorin's model \cite{S} for a left-invertible analytic operator with the wandering subspace property. We conclude this paper with a couple of examples of left-invertible operators $T$ with the wandering subspace property for which $\mathbb D\big(0, r(T')^{-1}\big) \subsetneqq bpe(T)$.
 
The following example is motivated from \cite[Example 2]{CT} and \cite[Example 3.1]{ACT}.

\begin{example}
Consider the operator $T$ on $\ell^2(\mathbb N)$ defined as $T e_0 = \lambda_0 e_0 + \lambda_1 e_1$ and $T e_n = \lambda_{n+1} e_{n+1}$ for all  $n \geqslant 1$, where $\{e_n : n \in \mathbb N\}$ is the standard orthonormal basis of $\ell^2(\mathbb N)$ and the weights $\lambda_n$ are described as follows:  \\
$$\lambda_0 =\lambda_1=\lambda_2=\lambda_3=\lambda_4=1,\ \mbox{and}\ \lambda_k=\begin{cases}
\frac{1}{2}, & \mbox{if}\ 2^n+1 \leq k \leq 3.2^{n-1},\ n \geq 2,\\
1, & \mbox{otherwise.}
\end{cases}$$
The operator $T$ is nothing but a weighted composition operator on $\ell^2(\mathbb N)$ with the symbol $\phi: \mathbb N \rar \mathbb N$ defined as $\phi(0) = 0$ and $\phi(n) = n-1$ for all $n \geqslant 1$. It can also be realized as a weighted shift on the following directed graph:
\begin{figure}[H]
\begin{tikzpicture}[scale=.8, transform shape]
\tikzset{vertex/.style = {shape=circle,draw, minimum size=1em}}
\tikzset{every loop/.style = {min distance = 15mm,  looseness = 15}}
\tikzset{edge/.style = {->,> = latex'}}
\node[vertex] (a) at  (0,0) {$0$};
\node[vertex] (c) at  (2,0) {$1$};
\node[vertex] (e) at  (4, 0) {$2$};
\node[vertex] (g) at  (6, 0) {$\ldots$};


\draw[edge] (a) to (c);
\draw[edge] (c) to (e);
\draw[edge] (e) to (g);

\draw[loop left][edge] (a) to (a);


\end{tikzpicture}
\end{figure}
\noindent
Note that $\inf_{n\geq1} \lambda_n = \frac{1}{2}$. Thus $T$ is left-invertible and the action of the Cauchy dual $T'$ of $T$ is same as that of $T$ with weights $\{\lambda'_n : n \in \mathbb N\}$ described as follows:
\beqn
\lambda'_0 = \lambda'_1 = \frac{1}{2} \quad \mbox{and} \quad \lambda'_n = \frac{1}{\lambda_n} \mbox{ for all } n \geqslant 2.
\eeqn
Note that the series $\sum_{m \in \mathbb N} \frac{\lambda'_1 \cdots \lambda'_{m+1}}{(\lambda'_0)^{m+1}}$ diverges. Therefore, by \cite[Theorem 2.1]{ACT}, $T'$ is analytic, and hence, by \cite[Proposition 2.7]{S}, $T$ has the wandering subspace property. Further, $\ker T^* = \mbox{span}\{e_0-e_1\}$. Also, observe that for any $n \geqslant 1$, the total number of $2$'s occurring in first $2^n$ places in $\{\lambda'_n : n \geqslant 1\}$ is equal to $2^{n-1}-2^{n-2}+2^{n-2}-2^{n-3}+\cdots+4-2=2^{n-1}-2$. Therefore, we get 
$$\lambda'_1 \lambda'_2 \cdots \lambda'_{2^n}=\frac{2^{2^{n-1}-2}}{2}=\frac{2^{2^{n-1}}}{8}.$$ 
Let $n$ be any positive integer. Then there are unique non-negative integers $m_n$ and $k_n$ such that $n=2^{m_n}+k_n$ with $0 \leqslant k_n < 2^{m_n}$.  Thus, we have 
\beq\label{prod-eq-ex}
\lambda'_1 \lambda'_2 \cdots \lambda'_{n} = \frac{2^{2^{m_n-1}-2+\alpha_{k_n}}}{2} = \frac{2^{2^{m_n-1}+\alpha_{k_n}}}{8},
\eeq
where
\beqn
\alpha_{k_n} = \begin{cases}
k_n & \mbox{ if } 0 \leqslant k_n \leqslant 2^{m_n - 1},\\
2^{m_n - 1} & \mbox{ if } 2^{m_n - 1} < k_n < 2^{m_n}.
\end{cases}
\eeqn 
Also, observe that if $0 \leqslant k_n < 2^{m_n - 1}$, then
\beqn
\frac{2^{m_n - 1} + \alpha_{k_n}}{2^{m_n} + k_n} = \frac{1}{2} \frac{2^{m_n} + 2k_n}{2^{m_n} + k_n} = \frac{1}{2} + \frac{1}{2} \frac{1}{\frac{2^{m_n}}{k_n} + 1} < \frac{1}{2} + \frac{1}{2} \frac{1}{3} = \frac{2}{3}.
\eeqn
Further, for $2^{m_n - 1} \leqslant k_n < 2^{m_n}$, we have
\beqn
\frac{2^{m_n - 1} + \alpha_{k_n}}{2^{m_n} + k_n} = \frac{2^{m_n}}{2^{m_n} + k_n} \leqslant \frac{2^{m_n}}{3 \cdot 2^{m_n - 1}} = \frac{2}{3}. 
\eeqn 
Consequently, we get
\beq\label{2by3}
\frac{2^{m_n - 1} + \alpha_{k_n}}{2^{m_n} + k_n} \leqslant \frac{2}{3} \mbox{ for all } n \geqslant 1.
\eeq
Let $x = e_0-e_1$. Then for each $n \geqslant 1$, we get
\beqn
\|T'^n x\|^2 &=& \|T'^n e_0\|^2 + \|T'^n e_1\|^2\\
&=& |\lambda'_0|^{2n} + |\lambda'_0|^{2(n-1)} |\lambda'_1|^2 + \cdots +|\lambda'_1 \lambda'_2 \cdots \lambda'_{n}|^2 + |\lambda'_2 \lambda'_3 \cdots \lambda'_{n+1}|^2\\
&\overset{\eqref{prod-eq-ex}}\leqslant& (n+1) \Big(\frac{2^{2^{m_n-1}+\alpha_{k_n}}}{8}\Big)^2 + 16 \Big(\frac{2^{2^{m_n-1}+\alpha_{k_n}}}{8}\Big)^2 \\
&=& (n+17) \Big(\frac{2^{2^{m_n-1}+\alpha_{k_n}}}{8}\Big)^2.
\eeqn
This together with \eqref{2by3} implies that $\limsup_{n \rar \infty} \|T'^n x\|^\frac{1}{n} \leqslant 2^\frac{2}{3}$. On the other hand, it is not difficult to see that $r(T') = 2$. Thus, in view of Corollary \ref{cor3.6}, we have
\beqn
\mathbb D\big(0, r(T')^{-1}\big) \subsetneqq \mathbb D(0, 2^{-\frac{2}{3}}) \subseteq abpe(T)  \subseteq bpe(T).
\eeqn
\end{example}

The foregoing example suggests a way to construct a class of examples for which $\mathbb D\big(0, r(T')^{-1}\big) \subsetneqq abpe(T)  \subseteq bpe(T)$. For this, we refer the reader to \cite{JJS} for details about the weighted shift on directed trees.

\begin{example}
Consider the sequence $\{\lambda_n\}_{n \geqslant 1}$ of positive numbers with the following properties:
\begin{enumerate}
\item[(i)] $\lambda_n \leqslant 1$;
\item[(ii)] $\displaystyle\inf_{n \geqslant 1} \lambda_n > 0$; 
\item[(iii)] $\displaystyle\limsup_{n \rar \infty} (\lambda_1 \cdots \lambda_n)^{-\frac{1}{n}} < \lim_{n \rar \infty} \big(\sup_{m \in \mathbb N}(\lambda_{m+1} \cdots \lambda_{m+n})^{-1} \big)^\frac{1}{n}$.
\end{enumerate} 
One such example of a sequence satisfying above three conditions has been exhibited in the previous example. Let $k$ be any positive integer. Then consider the rooted directed tree $\mathscr T_{k,0}= (V, \mathcal E)$ described as follows (the reader is referred to \cite[Chapter 6]{JJS} for more details about such trees):
\beqn
V &=& \{(0,0), (m,n) : m, n \geqslant 1\}; \\
\mathcal E &=& \{\big((0,0), (1,j)\big) : 1\leqslant j \leqslant k\} \cup \{\big((m,n), (m+1,n)\big) : m, n \geqslant 1\}.
\eeqn
An example of $\mathscr T_{3,0}$ is depicted in the following figure.
\begin{figure}[H]
\begin{tikzpicture}[scale=.8, transform shape]
\tikzset{vertex/.style = {shape=circle,draw, minimum size=1em}}
\tikzset{every loop/.style = {min distance = 15mm,  looseness = 15}}
\tikzset{edge/.style = {->,> = latex'}}
\node[vertex] (a) at  (0,0) {$(0,0)$};
\node[vertex] (b) at  (2,-2) {$(1,1)$};
\node[vertex] (c) at  (2,0) {$(1,2)$};
\node[vertex] (d) at  (2, 2) {$(1,3)$};
\node[vertex] (e) at  (4, -2) {$(2,1)$};
\node[vertex] (f) at  (4, 0) {$(2,2)$};
\node[vertex] (g) at  (4, 2) {$(2,3)$};
\node[vertex] (h) at  (6, -2) {$\ldots$};
\node[vertex] (i) at  (6, 0) {$\ldots$};
\node[vertex] (j) at  (6, 2) {$\ldots$};

\draw[edge] (a) to (b);
\draw[edge] (a) to (c);
\draw[edge] (a) to (d);
\draw[edge] (b) to (e);
\draw[edge] (c) to (f);
\draw[edge] (d) to (g);
\draw[edge] (e) to (h);
\draw[edge] (f) to (i);
\draw[edge] (g) to (j);



\end{tikzpicture}
\end{figure}
\noindent
The weights on the vertices of the tree $\mathscr T_{k,0}$ are described as follows:
\beqn
\lambda_{(n,1)} = \lambda_n, \ n \geqslant 1 \quad \mbox{and} \quad \lambda_{(m,n)} = 1 \mbox{ for all } m \geqslant 1 \mbox{ and } n = 2, \ldots, k.
\eeqn
Consider the Hilbert space $\ell^2(V)$. Let $T : \ell^2(V) \rar \ell^2(V)$ be the weighted shift whose action on the basis vectors is defined as follows:
\beqn
T e_{(0,0)} = \sum_{n=1}^k \lambda_{(1,n)} e_{(1,n)} \quad \mbox{and} \quad T e_{(m,n)} = \lambda_{(m+1,n)} e_{(m+1,n)} \mbox{ for all } m,n \geqslant 1.
\eeqn
Then $T$ is left-invertible. That $T$ has the wandering subspace property follows from \cite[Proposition 1.3.4]{CPT}. The Cauchy dual $T'$ of $T$ is again a weighted shift on $\ell^2(V)$ with weights $\{\lambda'_{(m,n)} : m,n \geqslant 1\}$ given  by 
\beqn
\lambda'_{(1,n)} = \frac{\lambda_{(1,n)}}{\|T e_{(0,0)}\|^2}, \ 1 \leqslant n \leqslant k, \mbox{ and } \lambda'_{(m,n)} = \frac{1}{\lambda_{(m,n)}}, \ m \geqslant 2,\ n = 1, \ldots, k.
\eeqn
Further,
\beqn
\ker T^* = \mbox{span}\{e_{(0,0)}\} \oplus \big(\ell^2(\{(1,n): 1\ \leqslant n \leqslant k\}) \ominus \mbox{span}\{T e_{(0,0)}\} \big).
\eeqn
Routine calculations show that for each $x = \alpha e_{(0,0)} + y$ in $\ker T^*$ and for each $n \geqslant 1$,
\beqn
\|T'^n x\|^2 &=& |\alpha|^2 \|T'^n e_{(0,0)}\|^2 + \|T'^n y\|^2 \\
 &=& |\alpha|^2 \big((\lambda'_{(1,1)})^2 (\lambda_2 \ldots \lambda_n)^{-2} + k-1\big) + a (\lambda_2 \ldots \lambda_n)^{-2} + b
 \eeqn
 for some non-negative scalars $a,b$. Consequently, $\|T'^n x\|^2 = c (\lambda_1 \ldots \lambda_n)^{-2} + d \ \mbox{ for some non-negative scalars } c, d$. This gives that for all $x \in \ker T^*$,  
 \beqn
r_{T'}(x) &=& \limsup_{n \rar \infty} \|T'^n x\|^\frac{1}{n} \leqslant  \limsup_{n \rar \infty} (\lambda_1 \cdots \lambda_n)^{-\frac{1}{n}} \\
&<& \lim_{n \rar \infty} \big(\sup_{m \in \mathbb N}(\lambda_{m+1} \cdots \lambda_{m+n})^{-1} \big)^\frac{1}{n} \leqslant r(T').
\eeqn
Let $\displaystyle r := \inf_{x \in \ker T^*} \frac{1}{r_{T'}(x)}$. Then by Corollary \ref{cor3.6}, we get 
\beqn
 \mathbb D\big(0, r(T')^{-1}\big) \subsetneqq \mathbb D(0, r) \subseteq abpe(T) \subseteq bpe(T).
\eeqn
\end{example}

\medskip \textit{Acknowledgment}. \
The author conveys his sincere thanks to Md. Ramiz Reza for his several helpful suggestions.

{}

\end{document}